\NeedsTeXFormat{LaTeX2e}
[1994/12/01]

\documentclass[12pt,fleqn]{article}
\setlength{\textheight}{22.5cm}
\setlength{\textwidth}{16.0cm}
\setlength{\topmargin}{0.0 cm}

\oddsidemargin = 0.0 cm
\evensidemargin = 0.0 cm
\usepackage{amsfonts,amssymb,amsmath,amsthm}

\newtheorem{theorem}{Theorem}[section]

\newtheorem{proposition}[theorem]{Proposition}

\newtheorem{corollary}[theorem]{Corollary}

\newtheorem{remark}[theorem]{Remark}
\newtheorem{conj}[theorem]{Conjecture}
\theoremstyle{definition}

\usepackage{ulem}
\setlength{\textheight}{23.0cm}
\setlength{\textwidth}{17.5cm}
\setlength{\topmargin}{-1.0cm}

\oddsidemargin = -0.5 cm
\evensidemargin = -0.5 cm

\def\a{\mathfrak{a}}

\def\g{\mathfrak{g}}
\def\k{\mathfrak{k}}
\def\p{\mathfrak{p}}

\def\so{\mathfrak{so}}
\def\sp{\mathfrak{sp}}

\def\R{{\bf R}}

\def\diag{\mathop{\hbox{diag}}}
\def\Ad{\mathop{\hbox{\small Ad}}}

\def\to{\rightarrow}

\begin{document}

\begin{center}
Sharp Estimates of Radial Dunkl and Heat Kernels in the Complex Case $A_n$\\
P.{} Graczyk\footnote{LAREMA, UFR Sciences, Universit\'e d'Angers, 2 bd Lavoisier, 49045 Angers cedex 01, France,piotr.graczyk@univ-angers.fr} and P.{} Sawyer\footnote{Department of Mathematics and Computer Science, Laurentian University, Sudbury, Canada P3E 2C6, psawyer@laurentian.ca}
\end{center}

\begin{abstract} 
In this article, we consider the radial Dunkl geometric case $k=1$ corresponding   
to flat  Riemannian symmetric spaces in the complex case
and we prove exact estimates for the positive valued Dunkl kernel and for the radial heat kernel.\\~\\
Dans cet article, nous consid\'erons le cas g\'eom\'etrique radial de Dunkl $ k = 1 $ correspondant
aux espaces sym\'etriques riemanniens plats dans le cas complexe
et nous prouvons des estimations exactes pour le noyau de Dunkl \`a valeur positive et pour le noyau de chaleur radial.
\end{abstract}
\bigskip
\noindent {\bf Key words} Punkl 33C67, 43A90, 53C35\\
\noindent {\bf MSC (2010)} 31B05, 31B25, 60J50, 53C35

\section*{Thanks}
The authors thank Laurentian University, Sudbury and  the D\'efimaths programme of the R\'egion des Pays de la Loire for their financial support.

\section{Introduction  and notations}

Finding good estimates of  Dunkl heat kernels is a challenging and important subject, developed recently in \cite{AnkerDH}.  Establishing estimates of the heat kernels is equivalent to estimating the Dunkl kernel as demonstrated by equation \eqref{heatSpher} below.

In this paper we prove exact estimates in the $W$-radial Dunkl geometric case of multiplicity $k=1$, corresponding to Cartan motion groups and flat Riemannian symmetric spaces with the ambient group complex $G$, the Weyl group $W$ and the root system $A_n$.

We study for the first time the non-centered heat kernel, denoted $p_t^W(X,Y)$, on Riemannian symmetric spaces and we provide its sharp estimates. Exact estimates were obtained in \cite{Anker1} in the centered case  $Y=0$ for all Riemannian symmetric spaces.

We provide exact estimates for the spherical functions  $\psi_\lambda(X)$ in the two variables $X,\lambda$  when $\lambda$ is real and, consequently, for the heat kernel
$ p_t^W(X,Y)$ in the three variables $t,X,Y$.

We recall here some basic terminology and facts about symmetric spaces associated to Cartan motion groups.

Let $G$ be a semisimple Lie group and let $\g=\k\oplus\p$ be the Cartan decomposition of $G$. 
We recall the definition of the Cartan motion group and the flat symmetric space associated with the semisimple Lie group $G$ with maximal 
compact subgroup $K$. The Cartan motion group is the semi-direct product $G_0=K\rtimes \p$ where the multiplication is defined by $(k_1,X_1)\cdot(k_2,X_2)=(k_1\,k_2,\Ad(k_1)(X_2)+X_1)$. The associated flat symmetric space is then $M=\p\simeq G_0/K$ (the action of $G_0$ on $\p$ is given by $(k,X)\cdot Y=\Ad(k)(Y)+X$).

The spherical functions for the symmetric space $M$ are then given by
\begin{equation*}
\psi_\lambda(X)=\int_K\,e^{\lambda(\Ad(k)(X))}\,dk
\end{equation*}
where $\lambda$ is a complex linear functional on $\a\subset \p$, a Cartan subalgebra of the Lie algebra of $G$.  To extend $\lambda$ to $X\in\Ad(K)\a=\p$, one uses $\lambda(X)=\lambda(\pi_\a(X))$ where $\pi_\a$ is the orthogonal projection with respect to the Killing form (denoted throughout this paper by $\langle \cdot,\cdot\rangle$).  
Note that in \cite{Helgason1, Helgason2, Helgason3}, $\lambda$ is replaced by $i\,\lambda$.

Throughout this paper, we usually assume that $G$ is a semisimple complex Lie group. The complex root systems are respectively $A_n$ for $n\geq 1$ (where $\p$ consists 
of the $n\times n$ hermitian matrices with trace 0), $B_n$ for $n\geq 2$ (where $\p=i\,\so(2\,n+1)$), $C_n$ for $n\geq 3$ (where $\p=i\,\sp(n)$) 
and $D_n$ for $n\geq 4$ (where $\p=i\,\so(2\,n)$) for the classical cases and the exceptional root systems $E_6$, $E_7$, $E_8$, $F_4$ and $G_2$.

The radial heat kernel is considered with respect to the invariant measure $\mu(dY)=\pi^2(Y)\,dY$ on $M$,
where $\pi(Y)=\prod_{\alpha>0} \alpha(Y)$.

Note also that in the curved case $M_0=G/K$,  the spherical functions for the symmetric space $M_0$ are then given by
\begin{equation*}
\phi_\lambda(e^X)=\int_K\,e^{(\lambda-\rho)H(e^X\,k)}\,dk
\end{equation*}
where $\rho$ is the half-sum of the roots counted with their multiplicities and $H(g)$ is the abelian component in the Iwasawa decomposition of $g$: $g=k\,e^{H(g)}\,n$.

\section{Estimates of spherical functions and of the heat kernel}\label{Spherical}

We will be developing a sharp estimate for the spherical function $\psi_\lambda(X)$. We  introduce the following useful convention. We will write
\begin{align*}
f(t,X,\lambda)\asymp g(t,X,\lambda)
\end{align*}
in a given domain of $f$ and $g$ if there exists constants $C_1>0$ and $C_2>0$
independent of $t$, $X$ and $\lambda$ such that
$
C_1\,f(t,X,\lambda)\leq  g(t,X,\lambda)\leq  C_2 \, g(t,X,\lambda)
$
in the domain of consideration.

We conjecture the following global estimate for the spherical function in the complex case.
\begin{conj}\label{conj}
On flat Riemannian symmetric spaces with complex group $G$,
we have
\begin{equation*}
\psi_\lambda(X)
\asymp \frac{e^{\langle \lambda,X \rangle}}{\prod_{\alpha > 0 \ } (1+ \alpha(\lambda) \alpha(X))},\qquad \lambda\in{\overline{\a}^+}, X\in
 {\overline{\a}^+}.
\end{equation*}
\end{conj}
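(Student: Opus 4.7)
The plan is to exploit the closed-form Harish-Chandra formula available in the complex case. For the Cartan motion group of a complex semisimple $G$, the spherical function takes the explicit form
$$\psi_\lambda(X) \;=\; c_W\,\frac{\sum_{w \in W} \varepsilon(w)\,e^{\langle w\lambda, X\rangle}}{\pi(\lambda)\,\pi(X)},$$
and in type $A_n$ the alternating numerator is the determinant $\det(e^{\lambda_i x_j})$. The conjecture thus reduces to the two-sided bound
$$\frac{\sum_{w}\varepsilon(w)\,e^{\langle w\lambda, X\rangle}}{\pi(\lambda)\pi(X)} \;\asymp\; \frac{e^{\langle \lambda, X\rangle}}{\prod_{\alpha>0}(1+\alpha(\lambda)\alpha(X))}$$
uniformly on $\overline{\a}^+\times\overline{\a}^+$.

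For the \textbf{upper bound}, I would first pull out the factor $e^{\langle \lambda, X\rangle}$, which dominates each $e^{\langle w\lambda, X\rangle}$ since $\lambda$ and $X$ lie in the same positive chamber, and then partition the positive roots according to whether $\alpha(\lambda)\alpha(X)\lesssim 1$ or $\alpha(\lambda)\alpha(X)\gtrsim 1$. On small roots, $(1+\alpha(\lambda)\alpha(X))^{-1}\asymp 1$ and nothing is needed; on large roots, the required polynomial decay is already present in $\pi(\lambda)\pi(X)=\prod_{\alpha>0}\alpha(\lambda)\alpha(X)$. What remains is to show that the alternating numerator does not exceed $e^{\langle \lambda, X\rangle}\prod_{\alpha\text{ small}}\alpha(\lambda)\alpha(X)$ by more than a constant; this would be carried out by expanding each $e^{\langle w\lambda, X\rangle}-e^{\langle \lambda, X\rangle}$ as a telescoping sum over simple reflections, whose terms are divisible by the corresponding $\alpha(\lambda)\alpha(X)$.

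For the \textbf{lower bound}, the natural tool is the Harish-Chandra-Itzykson-Zuber representation
$$\psi_\lambda(X) = \int_{U(n+1)} e^{\operatorname{tr}(\operatorname{diag}(\lambda)\,U\operatorname{diag}(X)U^*)}\,dU,$$
which realises $\psi_\lambda(X)$ as a positive average of exponentials. Localising the integral around the critical point $U = I$ and performing a quadratic saddle-point expansion yields a Gaussian integral whose covariance in each off-diagonal coordinate is of order $\bigl((\lambda_i-\lambda_j)(x_i-x_j)\bigr)^{-1}$; executing this Gaussian integration then produces exactly the product $\prod_{\alpha>0}(1+\alpha(\lambda)\alpha(X))^{-1}$. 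Alternatively, one may proceed by induction on rank via the branching $U(n+1)\downarrow U(n)$, which yields a one-dimensional recursive integral representation amenable to inductive control.

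The principal obstacle is \emph{uniformity} of the lower bound across all parameter regimes: the saddle-point approximation degrades precisely when some $\alpha(\lambda)\alpha(X)$ becomes small, but in that regime the conjectured factor $(1+\alpha(\lambda)\alpha(X))^{-1}$ is also of order $1$, so the estimate must interpolate continuously between the small and large extremes. I expect the cleanest implementation is to separate variables root by root using the inductive structure, handling each transition with a direct one-dimensional Laplace-type estimate.
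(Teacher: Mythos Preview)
Your primary approaches both have genuine gaps. For the upper bound, the telescoping idea does not readily give the correct factor: what you need is
\[
\sum_{w\in W}\varepsilon(w)\,e^{\langle w\lambda,X\rangle}\;\lesssim\; e^{\langle\lambda,X\rangle}\prod_{\alpha>0}\frac{\alpha(\lambda)\alpha(X)}{1+\alpha(\lambda)\alpha(X)},
\]
and while the alternating sum is indeed divisible by $\pi(\lambda)\pi(X)$, controlling the \emph{quotient} uniformly by $\prod_\alpha(1+\alpha(\lambda)\alpha(X))^{-1}$ is exactly the heart of the matter and is not delivered by pairing terms along simple reflections. For the lower bound, you yourself identify the obstruction: the saddle at $U=I$ becomes degenerate precisely when some $\alpha(\lambda)\alpha(X)\to 0$, and there is no evident mechanism in your sketch for patching the Gaussian approximation across all configurations of small and large roots simultaneously.

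Your throwaway alternative---induction on rank via the branching $U(n{+}1)\downarrow U(n)$---is in fact what the paper does, and it is the only route the paper makes work (the statement remains a conjecture outside type~$A$). But the implementation is more intricate than a ``one-dimensional Laplace-type estimate'' per step. The paper rewrites the iterated integral coming from branching as
\[
I^{(n)}(\lambda;X)=\int_{x_{n+1}}^{x_n}\!\!\cdots\!\int_{x_2}^{x_1} e^{-\lambda_0(X'-Y)}\prod_{i<j<n}\frac{(y_i-y_j)(\lambda_i-\lambda_j)}{1+(y_i-y_j)(\lambda_i-\lambda_j)}\,dy_1\cdots dy_n,
\]
and then proves two non-obvious lemmas: an \emph{approximate factorization} $I^{(n)}\asymp\prod_{k=1}^n I^{(n)}_k$ into decoupled one-dimensional integrals (obtained by restricting each $y_i$ to the upper half of its interval and comparing $y_i-y_j$ with $x_i-y_j$), and a \emph{recursive identity} expressing $I^{(n+1)}$ in terms of $I^{(n)}$ and $I^{(n-1)}$ under the symmetry-breaking hypothesis $\alpha_1(X)\ge\alpha_{n+1}(X)$. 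Only after these is the induction closed. Neither step is a routine Laplace estimate, and both are needed to get the two-sided bound uniformly; your proposal does not anticipate this structure.
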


\begin{remark}
Recall that, denoting  $\delta(X) =\prod_{\alpha>0}\sinh^2 \alpha(X)$, we have
\begin{align}
\phi_\lambda(e^X)&=\frac{\pi(X)}{\delta^{1/2}(X)}\,\psi_\lambda(X).\label{phipsi}
\end{align}

Since $ \delta^{1/2}(X) \asymp e^{\rho(X)}\,\pi(X)/\prod_{\alpha>0}\,(1+\alpha(X))$ in the complex case, Conjecture \ref{conj} therefore becomes
\begin{align}\label{Ccurved}
\phi_\lambda(e^X)&\asymp
e^{(\lambda-\rho)(X)}\,\prod_{\alpha>0}\,\frac{1+\alpha(X)}{1+ \alpha(\lambda) \alpha(X)}   
\end{align}
in the curved complex case.

Let us compare the estimate
\eqref{Ccurved}
we conjecture for $\phi_\lambda$ with the one obtained in \cite{Narayanana}, cf.{} also \cite{Schapira}. The estimates in  \cite{Narayanana}
 apply in all the generality of hypergeometric functions of Heckman and Opdam. 
The authors show that there exists constants $C_1(\lambda)>0$, $C_2(\lambda)>0$ such that
\begin{align*}
C_1(\lambda)\,e^{(\lambda-\rho)(X)}
\,\prod_{\genfrac{}{}{0pt}{}{\alpha>0,}{\alpha(\lambda)=0}}\,(1+\alpha(X))
\leq\phi_\lambda(e^X)\leq C_2(\lambda)\,e^{(\lambda-\rho)(X)}
\,\prod_{\genfrac{}{}{0pt}{}{\alpha>0,}{\alpha(\lambda)=0}}\,(1+\alpha(X)).
\end{align*}
Given \eqref{phipsi}, corresponding estimates clearly also hold in the flat case for $\psi_\lambda(X)$.
The interest of our result, in the case $A_n$, lies in the fact that our estimate is universal in both $\lambda$ and $X$.
\end{remark}
The results of  \cite{Narayanana,  Schapira} and our estimates in the $A_n$ case strongly suggest that the  Conjecture  \ref{conj} is true for any complex root system.

Note that asymptotics of $\psi_\lambda(t\,X)$ when $\lambda$ and $X$ are singular and $t\to\infty$
were proven in \cite{PGPS1} for all classical complex root systems and  the systems $F_4$ and $G_2$. \\

Consider the relationship between the Dunkl kernel $E_k(X,Y)$ and the Dunkl heat kernel $p_t(X,Y)$, as given in \cite[Lemma 4.5]{R0}  
\begin{align} \label{heatSpher}
p_t(X,Y)&=
\frac1{2^{\gamma+d/2} c_k}
\,t^{-\frac{d}{2}-\gamma}
\,e^{\frac{-|X|^2-|Y|^2}{4t}}\,E_k\left(X,\frac{Y}{2t}\right),
\end{align}
where $\gamma$ is the number of positive roots and the constant  $c_k$ is the Macdonald--Mehta--Selberg integral.  
The formula \eqref{heatSpher} remains true for the $W$-invariant kernels  $p^W_t$  and   $E^W$. In the geometric cases $k=\frac12, 1$ and $2$, by \cite{dJ}, the $W$-invariant formula \eqref{heatSpher} translates  
in a similar relationship between the spherical function $\psi_\lambda$ and the heat kernel $p_t^W(X,Y)$:
\begin{align}\label{heatSpher1}
p_t^W(X,Y)&=\frac1{2^{\gamma+d/2} c_k} \,t^{-\frac{d}{2}-\gamma} \,e^{\frac{-|X|^2-|Y|^2}{4t}}\,\psi_X\left(\frac{Y}{2t}\right).
\end{align}
A simple direct proof of \eqref{heatSpher1} for $k=1$ is given in \cite[Remark 2.9]{PGPS1}.

Equation \eqref{heatSpher1} and Conjecture \ref{conj} bring us to an equivalent conjecture for the heat kernel $p_t^W(X,Y)$.

\begin{conj} \label{heatconj}
We have
\begin{equation*}
p_t^W(X,Y) \asymp t^{-\frac{d}{2}}
\,\frac{e^{\frac{-|X-Y|^2}{4t}}}{\prod_{\alpha>0}\,(t+\alpha(X)\,\alpha(Y))}.
\end{equation*}
\end{conj}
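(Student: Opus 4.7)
The plan is to derive Conjecture \ref{heatconj} from Conjecture \ref{conj} by means of the identity \eqref{heatSpher1} relating the $W$-invariant heat kernel to the spherical function. Both the heat kernel and the proposed right-hand side are $W$-invariant in $X$ and in $Y$ separately, so I may reduce to the case $X,Y\in\overline{\a}^+$; in this range Conjecture \ref{conj} applies directly with $\lambda:=X$ and spatial argument $Y/(2t)\in\overline{\a}^+$.

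The first step is to insert \eqref{heatSpher1}, which expresses $p_t^W(X,Y)$ as an explicit constant times $t^{-d/2-\gamma}\,e^{-(|X|^2+|Y|^2)/(4t)}\,\psi_X(Y/(2t))$. Applying Conjecture \ref{conj} then gives
\[
\psi_X\!\left(\tfrac{Y}{2t}\right)\asymp\frac{e^{\langle X,Y\rangle/(2t)}}{\prod_{\alpha>0}\bigl(1+\alpha(X)\alpha(Y)/(2t)\bigr)}.
\]
Two elementary simplifications finish the argument. First, completing the square in the exponent yields $-(|X|^2+|Y|^2)/(4t)+\langle X,Y\rangle/(2t)=-|X-Y|^2/(4t)$, which produces the Gaussian factor $e^{-|X-Y|^2/(4t)}$ demanded by the conjecture. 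Second, clearing denominators in the product gives $\prod_{\alpha>0}(1+\alpha(X)\alpha(Y)/(2t))=(2t)^{-\gamma}\prod_{\alpha>0}(2t+\alpha(X)\alpha(Y))$, so the factor $(2t)^\gamma$ cancels the $t^{-\gamma}$ appearing in \eqref{heatSpher1} up to the absolute constant $2^\gamma$, which is absorbed into $\asymp$. Finally, the trivial comparison $2t+\alpha(X)\alpha(Y)\asymp t+\alpha(X)\alpha(Y)$ replaces $2t$ by $t$ in each factor of the denominator and yields exactly the estimate in Conjecture \ref{heatconj}.

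I expect the main obstacle to lie entirely upstream, in Conjecture \ref{conj} itself; the implication Conjecture \ref{conj} $\Longrightarrow$ Conjecture \ref{heatconj} is only the bookkeeping above, and running the same manipulation in reverse yields the converse implication, so the two statements are genuinely equivalent (which is why the text introduces Conjecture \ref{heatconj} as an \emph{equivalent conjecture}). Consequently the substantive task reduces to producing sharp two-sided bounds for $\psi_\lambda(X)$ in the complex case $A_n$, uniformly in both $\lambda\in\overline{\a}^+$ and $X\in\overline{\a}^+$; this is where I would expect the paper's technical effort to concentrate, and where the $A_n$-specific integral representation of $\psi_\lambda$ (of Harish-Chandra/Itzykson--Zuber type) will presumably be exploited.
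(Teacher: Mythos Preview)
Your derivation is correct and matches the paper's approach: the text itself presents Conjecture~\ref{heatconj} as an equivalent reformulation of Conjecture~\ref{conj} obtained via \eqref{heatSpher1}, and your bookkeeping (completing the square in the exponent, absorbing the factor $(2t)^{-\gamma}$ into $t^{-\gamma}$, and replacing $2t$ by $t$ in each factor) is exactly the computation that underlies that equivalence. Your closing remark that the substantive work lies in proving Conjecture~\ref{conj} for $A_n$ via the iterative integral formula for $\psi_\lambda$ is also in line with what the paper does.
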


Consider also the relationship between the heat kernel $p_t^W(X,Y)$ and the heat kernel $\tilde{p}^W_t(X,Y)$ in the curved case.  We have 
\begin{align}\label{CurvedFlat}
\tilde{p}^W_t(X,Y)&= e^{-|\rho|^2 t}\, \frac {\pi(X)\,\pi(Y)}{\delta^{1/2}(X)\,\delta^{1/2}(Y)} \,p_t^W(X,Y).
\end{align}
This relation follows directly from the fact that the respective radial Laplacians and radial measures are $\pi^{-1}\,L_\a\circ \pi$ and $\pi(X)\,dX$ in the flat case and $\delta^{-1/2}\,(L_\a-|\rho|^2)\circ \delta^{1/2}$ and $\delta(X)\,dX$  in the curved case ($L_\a$ stands for the Euclidean Laplacian on $\a$).

In the curved complex case, Conjecture \ref{heatconj} becomes 
\begin{align*}
\tilde{p}^W_t(X,Y)&\asymp 
e^{-|\rho|^2 t} t^{-\frac{d}{2}}\,e^{-\rho(X+Y)}
\,\prod_{\alpha>0}\,\frac{(1+\alpha(X))\,(1+\alpha(Y))}{(t+\alpha(X)\,\alpha(Y)}\,e^{\frac{-|X-Y|^2}{4t}}.
\end{align*}

\begin{remark}
In  \cite{PGPSTL}, sharp estimates of $W$-invariant Poisson and Newton kernels in the complex Dunkl case were obtained,
by exploiting the method of construction of these  $W$-invariant kernels by alternating sums.
When a root system $\Sigma$ acts in $\R^d$, the sharp estimates of \cite{PGPSTL} have the common form
\begin{equation}\label{POISSON}
{\mathcal K}^W(X,Y)
\asymp \frac{{\mathcal K}^{\R^d}(X,Y)}{\prod_{\alpha > 0 \ } (|X-Y|^2+ \alpha(X)\, \alpha(Y))},\qquad  X,Y\in{\overline{\a}^+},
\end{equation}
where ${\mathcal K}^W(X,Y)$  is the $W$-invariant  kernel in Dunkl setting and ${\mathcal K}^{\R^d}(X,Y)$ is the classical kernel on  $\R^d$.
Let us observe a common pattern in the appearance of the classical kernels ${\mathcal K}^{\R^d}$ and of products of roots $\alpha(X)\,\alpha(Y)$
in formulas \eqref{POISSON} and of the  Fourier kernel $e^{\langle \lambda,X\rangle}$ and the classical Gaussian heat kernel and 
of  products $\alpha(\lambda)\alpha(X)$ in 
the estimates given in Conjecture \ref{conj} and Conjecture \ref{heatconj}.
\end{remark}

\subsection{Proof of Conjecture \ref{conj} in some cases}
We start with a practical result.
\begin{proposition}\label{XWY}
Let $\alpha_i$ be the simple roots and let $A_{\alpha_i}$ be such that $\langle X,A_{\alpha_i}\rangle=\alpha_i(X)$ for $X\in\a$.
Suppose 
$X\in\a^+$ and $w\in W\setminus\{id\}$. Then we have
\begin{align}\label{CL}
Y-w\,Y=\sum_{i=1}^r \,2\,\frac{a_i^w(Y)}{|\alpha_i|^2}\,A_{\alpha_i}
\end{align}
where $a_i^w$ is a linear combination of positive simple roots with non-negative integer coefficients for each $i$.
\end{proposition}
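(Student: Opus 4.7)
The natural approach is to reduce to a single simple reflection and then iterate along a reduced decomposition. Fix a reduced expression $w = s_{i_1}\cdots s_{i_\ell}$, where $s_{i_k}$ is the reflection associated to the simple root $\alpha_{i_k}$. Telescoping yields
\begin{align*}
Y - wY \;=\; \sum_{k=1}^\ell s_{i_1}\cdots s_{i_{k-1}}\bigl(Y - s_{i_k} Y\bigr),
\end{align*}
and the standard reflection formula $s_{i_k}(Y) = Y - \frac{2\alpha_{i_k}(Y)}{|\alpha_{i_k}|^2}\,A_{\alpha_{i_k}}$ converts this to
\begin{align*}
Y - wY \;=\; \sum_{k=1}^\ell \frac{2\alpha_{i_k}(Y)}{|\alpha_{i_k}|^2}\; s_{i_1}\cdots s_{i_{k-1}}\,A_{\alpha_{i_k}}.
\end{align*}

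Next I would use that $W$ acts on $\a$ by isometries of $\langle\cdot,\cdot\rangle$, so $s_{i_1}\cdots s_{i_{k-1}}\,A_{\alpha_{i_k}} = A_{\beta_k}$ with $\beta_k := s_{i_1}\cdots s_{i_{k-1}}\alpha_{i_k}$. At this point one invokes the classical fact from Coxeter theory: for a reduced word, the elements $\beta_1,\ldots,\beta_\ell$ are exactly the positive roots that $w^{-1}$ sends to negative roots, and in particular each $\beta_k$ is a positive root. Writing $\beta_k = \sum_{j=1}^r n_j^k\,\alpha_j$ with $n_j^k\in\mathbb{Z}_{\geq 0}$ (the defining property of a positive root) and using linearity $A_{\beta_k}=\sum_j n_j^k\,A_{\alpha_j}$, one regroups by simple root to obtain
\begin{align*}
Y - wY \;=\; \sum_{j=1}^r 2\left(\sum_{k=1}^\ell \frac{n_j^k\,\alpha_{i_k}(Y)}{|\alpha_{i_k}|^2}\right) A_{\alpha_j}.
\end{align*}

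In the simply-laced case $A_n$ that is the focus of the paper, all simple roots share a common length, so $|\alpha_{i_k}|^2 = |\alpha_j|^2$ and one reads off
\begin{align*}
a_j^w(Y) \;=\; \sum_{k=1}^\ell n_j^k\,\alpha_{i_k}(Y),
\end{align*}
manifestly a non-negative integer combination of positive simple roots, as required. The only nontrivial input is the classical statement that the $\beta_k$ produced by a reduced expression are positive roots; the rest is telescoping and linear algebra. If one prefers a self-contained argument, the same identity can be derived by induction on $\ell(w)$, the inductive step relying on the equivalent fact that $\ell(s_i w') = \ell(w')+1$ if and only if $(w')^{-1}\alpha_i > 0$. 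The hypothesis $X\in\a^+$ in the proposition plays no role in the identity itself and appears to signal the context in which the decomposition will later be applied.
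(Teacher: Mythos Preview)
The paper itself gives no argument here, only the line ``Refer to \cite{PGPSTL}'', so there is no in-paper proof to compare against. Your telescoping along a reduced word, combined with the classical fact that the roots $\beta_k=s_{i_1}\cdots s_{i_{k-1}}\alpha_{i_k}$ form the inversion set of $w^{-1}$ and are therefore positive, is the standard route and is correct.

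There is one small gap. The proposition is stated (and later used, in Propositions~\ref{E1} and~\ref{BIG}) for an arbitrary root system, not only for $A_n$, yet your derivation of \emph{integer} coefficients explicitly invokes the simply-laced hypothesis $|\alpha_{i_k}|^2=|\alpha_j|^2$. The fix is immediate once you pass to coroots. Since $W$ preserves lengths, $|\beta_k|=|\alpha_{i_k}|$, so the $k$-th summand in your expression is
\[
\frac{2\alpha_{i_k}(Y)}{|\alpha_{i_k}|^2}\,A_{\beta_k}
=\alpha_{i_k}(Y)\cdot\frac{2A_{\beta_k}}{|\beta_k|^2}.
\]
Now expand the positive \emph{coroot} $\beta_k^\vee=2\beta_k/|\beta_k|^2$ in simple coroots, $\beta_k^\vee=\sum_j m_j^k\,\alpha_j^\vee$ with $m_j^k\in\mathbb{Z}_{\ge 0}$ (every positive coroot is a non-negative integer combination of simple coroots, since the coroots form a root system in their own right). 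Translating back via $A$, this gives
\[
Y-wY=\sum_{j=1}^r\frac{2}{|\alpha_j|^2}\Bigl(\sum_{k=1}^\ell m_j^k\,\alpha_{i_k}(Y)\Bigr)A_{\alpha_j},
\]
so $a_j^w=\sum_k m_j^k\,\alpha_{i_k}$ has non-negative integer coefficients in every root system, simply laced or not. Your closing observation that the hypothesis $X\in\a^+$ plays no role in the identity itself is correct.
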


\begin{proof}
Refer to \cite{PGPSTL}.
\end{proof}

\begin{remark}\label{M}
Note that $a_i^w(Y)/|\alpha_i|^2$ is bounded by $C\,\max_k\,|\alpha_k(Y)|$ where $C$ is a constant depending only on $w\in W$ and, ultimately, on $W$.
\end{remark}

\begin{corollary}\label{alpha k in ak}
Let $Y\in \overline{\a^+}$ and $w\in W$. Consider the decomposition \eqref{CL} of $Y-wY$. If $a_k^w(Y)\not= 0$ then $\alpha_k$ appears in $a_k^w$, i.e. 
$a_k^w=\sum_{i=1}^r n_i \alpha_i$ with $n_k>0$.
\end{corollary}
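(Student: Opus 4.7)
My plan is to fix a reduced expression of $w$, telescope the decomposition \eqref{CL}, and then identify the first occurrence of $s_k$. Choose $w = s_{i_1}\cdots s_{i_m}$ reduced, and set $v_j := s_{i_1}\cdots s_{i_j}$, $v_0 := \mathrm{id}$. The reflection identity $Y - s_{i_j}Y = \frac{2\alpha_{i_j}(Y)}{|\alpha_{i_j}|^2}A_{\alpha_{i_j}}$ together with the $W$-equivariance $vA_\alpha = A_{v\alpha}$ telescopes to
\begin{align*}
Y - wY = \sum_{j=1}^m v_{j-1}(Y - s_{i_j}Y) = \sum_{j=1}^m \frac{2\alpha_{i_j}(Y)}{|\alpha_{i_j}|^2}A_{\beta_j},\qquad \beta_j := v_{j-1}\alpha_{i_j},
\end{align*}
with each $\beta_j$ a positive root (since $v_{j-1}s_{i_j}$ reduces). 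Expanding $\beta_j = \sum_\ell c_{j\ell}\alpha_\ell$ with $c_{j\ell}\in\mathbb{Z}_{\geq 0}$ and comparing with \eqref{CL} gives
\begin{align*}
a_k^w = \sum_{j=1}^m \frac{|\alpha_k|^2}{|\alpha_{i_j}|^2}\,c_{jk}\,\alpha_{i_j},
\end{align*}
so that the coefficient of $\alpha_k$ in $a_k^w$ equals $n_k = \sum_{j\,:\,i_j=k}c_{jk}$.

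The bridging observation is that $c_{jk}>0$ forces $k\in\{i_1,\ldots,i_j\}$. Indeed, when $k\notin\{i_1,\ldots,i_j\}$ the prefix $v_{j-1}$ is a product of $s_\ell$ with $\ell\neq k$, and $\alpha_{i_j}$ already lies in $\mathrm{span}\{\alpha_\ell:\ell\neq k\}$. The reflection formula $s_\ell\alpha = \alpha - \frac{2\langle\alpha_\ell,\alpha\rangle}{|\alpha_\ell|^2}\alpha_\ell$ shows every generator $s_\ell$ with $\ell\neq k$ preserves this hyperplane, so a short induction gives $\beta_j\in\mathrm{span}\{\alpha_\ell:\ell\neq k\}$, whence $c_{jk}=0$. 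Because $a_k^w\not\equiv 0$ forces at least one $c_{jk}$ to be positive, at least one index $j$ with $i_j=k$ must occur.

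Let $j_0$ be the smallest such index. The same invariance argument, applied now to $\alpha_k$ itself, yields $\beta_{j_0} = v_{j_0-1}\alpha_k \in \alpha_k + \mathrm{span}\{\alpha_\ell:\ell\neq k\}$, so $c_{j_0,k}=1$. Together with $c_{jk}\geq 0$ for the remaining $j$, this gives $n_k\geq 1 > 0$, which is the claim. The delicate point is insisting on the \emph{minimality} of $j_0$: for later $j$'s with $i_j=k$ the root $\beta_j$ can indeed have vanishing $\alpha_k$-coefficient (in $A_2$, for instance, $s_1 s_2\alpha_1 = \alpha_2$), so the argument truly relies on the prefix $v_{j_0-1}$ sitting in the parabolic subgroup generated by $\{s_\ell:\ell\neq k\}$; everything else is bookkeeping on the telescoping sum.
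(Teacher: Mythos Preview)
Your argument is correct. The telescoping
\[
Y-wY=\sum_{j=1}^m v_{j-1}(Y-s_{i_j}Y)=\sum_{j=1}^m\frac{2\alpha_{i_j}(Y)}{|\alpha_{i_j}|^2}A_{\beta_j},\qquad \beta_j=v_{j-1}\alpha_{i_j}>0,
\]
is standard (positivity of $\beta_j$ being exactly the condition $\ell(v_{j-1}s_{i_j})=\ell(v_{j-1})+1$), and your extraction of $a_k^w$ and of its $\alpha_k$-coefficient $n_k=\sum_{j:\,i_j=k}c_{jk}$ is clean. The two invariance facts you use --- that the parabolic subgroup generated by $\{s_\ell:\ell\neq k\}$ preserves $\mathrm{span}\{\alpha_\ell:\ell\neq k\}$ and preserves the coset $\alpha_k+\mathrm{span}\{\alpha_\ell:\ell\neq k\}$ --- follow directly from the reflection formula, and together they force $c_{j_0,k}=1$ at the first occurrence $i_{j_0}=k$. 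The remark that later occurrences may contribute zero (your $A_2$ example) is apt and shows why minimality of $j_0$ matters.

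As for comparison: the paper does not actually prove this corollary here; it simply refers to \cite{PGPSTL}. So you have supplied a complete, self-contained argument where the paper only gives a citation. Your route via reduced expressions and inversion sets is natural and arguably the ``right'' way to see why the $k$-th simple root must appear in $a_k^w$: the linear functional $a_k^w$ is built from the $\alpha_{i_j}$'s weighted by the $\alpha_k$-coordinates of the inversions $\beta_j$, and those coordinates can only be nonzero once $s_k$ has entered the word. One minor cosmetic point: your displayed formula for $a_k^w$ has coefficients $\frac{|\alpha_k|^2}{|\alpha_{i_j}|^2}c_{jk}$ that need not be integers in the non-simply-laced case, so it does not by itself recover the integrality asserted in Proposition~\ref{XWY}; but you only need the $\alpha_k$-coefficient, where the ratio is $1$, so this does not affect the proof.
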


\begin{proof}
Refer to \cite{PGPSTL}.
\end{proof}

\begin{proposition}\label{E1}
Let $\delta>0$.
Suppose $\alpha_i(\lambda)\,\alpha_j(X)\le\delta$ for all $i$, $j$.  Then $\psi_{\lambda}(X) \asymp e^{\lambda(X)}$ (the constants involved only depend on $\delta$).
\end{proposition}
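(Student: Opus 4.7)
The plan is to exploit a scaling symmetry of $\psi_\lambda(X)$ to reduce the claim to a compact region, and then conclude by continuity and positivity.

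I would first observe that $\psi_\lambda(X)=\int_K e^{\lambda(\Ad(k)X)}\,dk$ is invariant under the rescaling $(\lambda,X)\mapsto(s\lambda,X/s)$ for every $s>0$: by linearity of $\Ad(k)$ the integrand is unchanged, so $\psi_{s\lambda}(X/s)=\psi_\lambda(X)$. The same rescaling preserves $\lambda(X)$ and every product $\alpha_i(\lambda)\alpha_j(X)$, so it simultaneously preserves the hypothesis, the quantity $\psi_\lambda(X)$, and the target $e^{\lambda(X)}$. Assuming $\lambda,X\neq0$ (the vanishing cases being trivial with constant $1$), set $M_\lambda=\max_i\alpha_i(\lambda)>0$ and $M_X=\max_j\alpha_j(X)>0$; positivity holds because the simple roots form a basis of $\a^*$. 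Choosing $s=\sqrt{M_X/M_\lambda}$ yields $\alpha_i(s\lambda)\le\sqrt{M_\lambda M_X}\le\sqrt{\delta}$ and symmetrically $\alpha_j(X/s)\le\sqrt{\delta}$ for every simple root, so the statement reduces to the set
\[
\mathcal{D}_\delta:=\bigl\{(\lambda,X)\in\overline{\a^+}\times\overline{\a^+}:\alpha_i(\lambda)\le\sqrt{\delta},\ \alpha_j(X)\le\sqrt{\delta}\text{ for every simple root}\bigr\},
\]
which is compact because bounding all simple roots on the chamber $\overline{\a^+}$ yields a bounded subset of $\a$.

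To finish, the function $(\lambda,X)\mapsto\psi_\lambda(X)\,e^{-\lambda(X)}=\int_K e^{\lambda(\Ad(k)X-X)}\,dk$ is continuous on $\mathcal{D}_\delta$ and strictly positive everywhere, so by compactness it attains a positive minimum $c_1(\delta)>0$ and a finite maximum $c_2(\delta)$, both depending only on $\delta$ and the root system. This gives $c_1(\delta)\,e^{\lambda(X)}\le\psi_\lambda(X)\le c_2(\delta)\,e^{\lambda(X)}$ on $\mathcal{D}_\delta$, and then on the entire hypothesized region by the scaling invariance established above.

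There is no real obstacle here: the one idea driving the argument is spotting the scaling symmetry, after which the rest is a routine continuity-and-compactness argument. The only fact needing verification in passing is the compactness of $\mathcal{D}_\delta$, which follows immediately from the simple roots being a basis of $\a^*$ in the complex semisimple setting.
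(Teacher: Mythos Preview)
Your argument is correct and takes a genuinely different route from the paper's. The paper sandwiches $\psi_\lambda(X)$ between $e^{w_{\min}\lambda(X)}$ and $e^{\lambda(X)}$ using the Abel (or R\"osler) kernel, which is a probability measure supported on the convex hull $C(X)$ of the Weyl orbit; the lower bound is then handled by the combinatorial decomposition $\lambda - w\lambda = \sum_i 2\,a_i^w(\lambda)\,A_{\alpha_i}/|\alpha_i|^2$ of Proposition~\ref{XWY}, together with the bound $a_i^w(\lambda)\le C\max_k\alpha_k(\lambda)$, which yields $\lambda(X)-w\lambda(X)\le C'\delta$ explicitly. Your approach instead exploits the dilation invariance $\psi_{s\lambda}(X/s)=\psi_\lambda(X)$ to normalize into the compact box $\mathcal{D}_\delta$, after which positivity and continuity finish the job. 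The paper's method is more constructive (one can read off an explicit constant of the form $e^{-C\delta}$) and makes transparent why the Remark following the proof extends the result to the general radial Dunkl setting via the R\"osler measure. Your method is slicker and avoids any Weyl-group combinatorics, but gives no quantitative handle on the constants; it is worth noting that it also generalizes to the Dunkl setting, since the generalized Bessel function $E_k^W$ enjoys the same homogeneity. One small caveat: your claim that the simple roots form a basis of $\a^*$ (hence $M_\lambda>0$ when $\lambda\neq0$, and $\mathcal{D}_\delta$ is compact) is valid in the semisimple setup of the paper, but would need the obvious adjustment of splitting off the central direction if one works, as in Section~3, with $A_{n-1}$ acting on all of $\R^n$.
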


\begin{proof}
Let $K(X,Y)$ be the kernel of the Abel transform. Recall that $K(X,Y)\,dY$ is a probability measure supported on $C(X)$, the convex envelope of the orbit $W\cdot X$.
Notice  that
\begin{equation}\label{fram}
e^{w_{min}\,\lambda(X)}\le \psi_\lambda(X)=\int_{C(X)}\,e^{\lambda(Y)}\,K(X,Y)\,dY \le e^{\lambda(X)}
\end{equation}
where
$w_{min}$ is the element of the Weyl group giving the minimum value of $w\,\lambda(X)$.
Now, using Proposition \ref{XWY} and Remark \ref{M} with $Y=\lambda$, we see that for any $w\in W $
\begin{align*}
e^{\lambda(X)}&\geq e^{w\,\lambda(X)}=e^{\langle w\lambda-\lambda,X\rangle}\,e^{\langle \lambda,X\rangle}
=\prod_{i=1}^r\,e^{-2\,\frac{a_i^w(\lambda)}{|\alpha_i|^2}\,\alpha_i(X)}\,e^{\langle \lambda,X\rangle}
\\&
\geq \prod_{i=1}^r\,e^{-2\,C\,(\max_k\,\alpha_k(\lambda))\,\alpha_i(X)}\,e^{\langle \lambda,X\rangle}
\geq \prod_{i=1}^r\,e^{-2\,C\,\delta}\,e^{\langle \lambda,X\rangle}.
\end{align*}
\end{proof}

\begin{remark}
This case and this method apply for any radial Dunkl case; it suffices to replace
$K(X,Y)\,dY$  by the so-called R\"osler measure $\mu_X(dY)$ in the integral in \eqref{fram}, see \cite{R1}.
\end{remark}

\begin{proposition}\label{new spherical}
A spherical function $\psi_{\lambda}(X)$ on $M$ is given by the formula
\begin{equation}\label{spherical fn 2}
\psi_{\lambda}(X)=\frac{\pi(\rho)}{2^\gamma\pi(\lambda)\,\pi(X)}\,\sum_{w\in W}\,{\epsilon(w)} e^{\langle w \lambda, X\rangle},
\end{equation}
where  $\rho=\frac{1}{2} \,\sum_{\alpha\in\Sigma^+}\,m_\alpha \alpha=
\sum_{\alpha\in\Sigma^+} \,\alpha$ and $\gamma=|\Sigma^+|$ is the number of positive roots (refer to \cite[Chap.{} IV, Proposition 4.8 and Chap.{} II, Theorem 5.35]{Helgason3}).
\end{proposition}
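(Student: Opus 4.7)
The plan is to deduce formula \eqref{spherical fn 2} from the classical Harish-Chandra formula for the elementary spherical functions in the curved complex case, combined with the flat--curved relation \eqref{phipsi} already introduced in the excerpt. The two key inputs are (i) that every positive root has multiplicity $m_\alpha=2$ in the complex case, which collapses Harish-Chandra's general expression for $\phi_\lambda$ to a closed form, and (ii) Weyl's denominator formula applied to the half-sum $\rho=\sum_{\alpha>0}\alpha$.

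First I would quote Harish-Chandra's explicit formula in the complex case,
$$\phi_\lambda(e^X)\;=\;\frac{\pi(\rho)}{\pi(\lambda)}\,\frac{\sum_{w\in W}\epsilon(w)\,e^{\langle w\lambda,X\rangle}}{\sum_{w\in W}\epsilon(w)\,e^{\langle w\rho,X\rangle}},$$
which is the content of the Helgason reference cited in the statement. Next, since $\rho$ equals twice the usual half-sum of positive roots in the complex case, the Weyl denominator identity specializes to
$$\sum_{w\in W}\epsilon(w)\,e^{\langle w\rho,X\rangle}\;=\;\prod_{\alpha>0}\bigl(e^{\alpha(X)}-e^{-\alpha(X)}\bigr)\;=\;2^\gamma\prod_{\alpha>0}\sinh\alpha(X)\;=\;2^\gamma\,\delta^{1/2}(X)$$
for $X\in\overline{\a^+}$, with $\delta(X)=\prod_{\alpha>0}\sinh^2\alpha(X)$ as in \eqref{phipsi}.

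Substituting these two displays into \eqref{phipsi}, that is $\psi_\lambda(X)=\delta^{1/2}(X)\,\phi_\lambda(e^X)/\pi(X)$, the factor $\delta^{1/2}(X)$ in the numerator cancels the $2^\gamma\,\delta^{1/2}(X)$ appearing in the denominator of $\phi_\lambda$, yielding directly
$$\psi_\lambda(X)\;=\;\frac{\pi(\rho)}{2^\gamma\,\pi(\lambda)\,\pi(X)}\sum_{w\in W}\epsilon(w)\,e^{\langle w\lambda,X\rangle},$$
which is \eqref{spherical fn 2}.

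The only point worth flagging is that the right-hand side looks singular on the walls $\pi(\lambda)=0$ and $\pi(X)=0$, whereas $\psi_\lambda(X)$ is smooth everywhere in both arguments. This is not a genuine obstacle: the alternating sum $\sum_w\epsilon(w)\,e^{\langle w\lambda,X\rangle}$ is $W$-antisymmetric in each of $\lambda$ and $X$, hence vanishes on each reflection hyperplane, and the resulting zeros are absorbed exactly by the factor $\pi(\lambda)\,\pi(X)$. One may either appeal to this antisymmetry directly, or argue by continuity from the regular locus $\lambda,X\in\a^+$, on which every factor is nonzero and the identity is an equality of real-analytic functions. I do not anticipate any substantive difficulty in this proposition; it is essentially a repackaging of Harish-Chandra's formula via \eqref{phipsi}, with Weyl's denominator formula performing the bookkeeping.
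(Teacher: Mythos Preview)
The paper does not supply its own proof of this proposition; it simply records the formula and refers the reader to Helgason \cite[Chap.~IV, Prop.~4.8 and Chap.~II, Thm.~5.35]{Helgason3}. Your proposal is a correct and natural unpacking of those references: Harish-Chandra's closed formula for $\phi_\lambda$ in the complex case, the Weyl denominator identity applied at $\rho=2\rho_0$ to rewrite $\sum_w\epsilon(w)e^{\langle w\rho,X\rangle}=2^\gamma\delta^{1/2}(X)$, and the relation \eqref{phipsi} to pass from $\phi_\lambda$ to $\psi_\lambda$. This is exactly the route the cited material dictates, so there is nothing to compare---your argument is the standard derivation the authors are pointing to, and the remark on removable singularities along the walls is a sensible addendum.
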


\begin{proposition}\label{BIG}
Suppose $\alpha(\lambda)\,\alpha(X)\ge (\log |W|)/2$ for all $\alpha>0$.
Then $$\psi_\lambda(X) \asymp
\displaystyle\frac{e^{\lambda(X)}}{\pi(\lambda)\pi(X)}.$$ 

We are assuming here that $|\alpha_i|\geq 1$ for each $i$.
\end{proposition}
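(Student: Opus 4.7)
The plan is to invoke the closed-form expression from Proposition \ref{new spherical},
\[
\psi_\lambda(X) = \frac{\pi(\rho)}{2^\gamma \pi(\lambda)\pi(X)} \sum_{w\in W} \epsilon(w)\, e^{\langle w\lambda, X\rangle},
\]
so the task reduces to proving that
$S := \sum_{w\in W} \epsilon(w)\, e^{\langle w\lambda, X\rangle}$ satisfies $S \asymp e^{\langle \lambda, X\rangle}$
under the hypothesis. Since $\lambda, X \in \overline{\a^+}$, the identity contribution $e^{\langle \lambda, X\rangle}$ is the largest of the $|W|$ terms, so it suffices to show that every other term is quantitatively dominated by it.

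For any $w \neq \mathrm{id}$, Proposition \ref{XWY} applied with $Y = \lambda$ yields
\[
e^{\langle w\lambda, X\rangle} = e^{\langle \lambda, X\rangle} \exp\!\Bigl(-\sum_{i=1}^r 2\, \frac{a_i^w(\lambda)}{|\alpha_i|^2}\, \alpha_i(X)\Bigr).
\]
Pick an index $k$ with $a_k^w \neq 0$; such $k$ exists because $w \neq \mathrm{id}$. By Corollary \ref{alpha k in ak}, $a_k^w = \sum_i n_i\, \alpha_i$ with $n_k \geq 1$ and $n_i \geq 0$ otherwise, and since $\lambda \in \overline{\a^+}$ makes every $\alpha_i(\lambda) \geq 0$, we have $a_k^w(\lambda) \geq \alpha_k(\lambda)$. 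Together with the standing assumption $|\alpha_k| \geq 1$ and the hypothesis $\alpha_k(\lambda)\, \alpha_k(X) \geq (\log |W|)/2$, the full sum in the exponent is bounded below by $2\, \alpha_k(\lambda)\, \alpha_k(X) \geq \log |W|$. Consequently $e^{\langle w\lambda, X\rangle} \leq e^{\langle \lambda, X\rangle}/|W|$ for each non-identity $w$.

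Summing the $|W| - 1$ non-identity terms by the triangle inequality gives
\[
\frac{1}{|W|}\, e^{\langle \lambda, X\rangle} \;\leq\; S \;\leq\; 2\, e^{\langle \lambda, X\rangle},
\]
which is exactly $S \asymp e^{\langle \lambda, X\rangle}$, and substituting back into the formula of Proposition \ref{new spherical} delivers the claim. The most delicate step is the lower bound $a_k^w(\lambda) \geq \alpha_k(\lambda)$, for which Corollary \ref{alpha k in ak} is essential; the numerical threshold $(\log |W|)/2$ in the hypothesis is calibrated precisely so that each non-identity exponential is at most $1/|W|$ times the identity one, leaving just enough room to beat the triangle inequality over all $|W|-1$ competing terms.
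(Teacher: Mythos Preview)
Your proof is correct and follows the same line as the paper's: both invoke Proposition~\ref{new spherical}, apply Proposition~\ref{XWY} and Corollary~\ref{alpha k in ak} to show $\langle\lambda,X\rangle-\langle w\lambda,X\rangle\ge\log|W|$ for every $w\neq\mathrm{id}$, and then bound the alternating sum above and below by constant multiples of $e^{\langle\lambda,X\rangle}$. The only cosmetic difference is in the lower bound: the paper uses that only $|W|/2$ of the signs $\epsilon(w)$ are negative to obtain $S\ge\tfrac12 e^{\langle\lambda,X\rangle}$, whereas you apply the triangle inequality to all $|W|-1$ non-identity terms and get $S\ge\tfrac{1}{|W|}e^{\langle\lambda,X\rangle}$; both suffice for the $\asymp$ conclusion.
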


\begin{proof}
Suppose $w\in W$ is not the identity.  In that case, $a_i^w(\lambda)$ is not equal to 0 for some $i$.  By  Proposition \ref{XWY} with $y=\lambda$ and
Corollary \ref{alpha k in ak}, $\lambda(X)-w\,\lambda(X)\geq 2\,a_i^{w}(\lambda)\,\alpha_i(X)/|\alpha_i|^2\geq 2\,\alpha_i(\lambda)\,\alpha_i(X)\geq \log |W|$. Each term
 $e^{\langle w \lambda, X\rangle}$
in the alternating sum \eqref{spherical fn 2} corresponding to $w\not=\hbox{id}$ is bounded by $e^{-\log |W|}\,e^{\lambda(X)}=e^{\lambda(X)}/|W|$.  Hence, since only half the terms in the sum are negative,
\begin{align*}
|W|\,e^{\lambda(X)}\geq \sum_{w\in W}\,{\epsilon(w)} e^{\langle w \lambda, X\rangle}
\geq e^{\lambda(X)}-\frac{|W|}{2}\,e^{\lambda(X)}/|W|=\frac{1}{2}\,e^{\lambda(X)}.
\end{align*}
\end{proof}

\section{The conjecture in the case of the root system $A_n$}

We will prove the conjecture in the case of the root system of type $A$.
\begin{theorem}\label{main}
In the case of the root system of type $A_n$ in the complex case, we have
\begin{equation}\label{MAIN}
\psi_\lambda(e^X)\asymp \frac{e^{\langle \lambda,X \rangle}}{\prod_{i<j}\,(1+(\lambda_i-\lambda_j)\,(x_i-x_j))},\qquad \lambda,~ X\in\overline{\a}^+.
\end{equation}
\end{theorem}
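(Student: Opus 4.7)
The starting point is Proposition \ref{new spherical} applied to $A_n$: with $\pi(\lambda) = \prod_{i<j}(\lambda_i - \lambda_j)$ and analogously for $X$, the Weyl-sum identity specializes to the Harish-Chandra--Itzykson--Zuber determinant
\begin{equation*}
\psi_\lambda(X) = \frac{\pi(\rho)}{2^\gamma\,\pi(\lambda)\,\pi(X)}\,\det\bigl(e^{\lambda_i x_j}\bigr)_{0 \le i,j \le n}.
\end{equation*}
Setting $u_{ij} := (\lambda_i-\lambda_j)(x_i-x_j) \ge 0$, Theorem \ref{main} is equivalent to the determinantal estimate
\begin{equation*}
\det\bigl(e^{\lambda_i x_j}\bigr) \asymp e^{\langle \lambda,X\rangle}\,\prod_{i<j}\,\frac{u_{ij}}{1+u_{ij}}.
\end{equation*}

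The plan is to interpolate between Propositions \ref{E1} and \ref{BIG} through a clustering argument that exploits the total ordering of the indices in the $A_n$ chamber. Fix a threshold $T$ large enough that the exponential-suppression argument of Proposition \ref{BIG} applies once pairwise products exceed $T$, and partition $\{0,\dots,n\}$ into maximal consecutive clusters $I_1,\dots,I_m$ such that all intra-cluster products $u_{ij}$ are bounded by a constant $T'$ depending only on $T$ and $n$, while every inter-cluster product $u_{ij}$ exceeds $T$. Split the Weyl sum of Proposition \ref{new spherical} along the cosets of the Young subgroup $S_{I_1}\times\cdots\times S_{I_m}\le S_{n+1}$. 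The cluster-preserving contribution factors as $\prod_k \det(e^{\lambda_i x_j})_{i,j\in I_k}$, and applying Proposition \ref{E1} to each block --- which is the Harish-Chandra formula for the sub-system $A_{|I_k|-1}$, whose hypotheses hold by construction --- produces the factor $e^{\langle\lambda,X\rangle}\,\prod_k \pi_{I_k}(\lambda)\,\pi_{I_k}(X)$. For any $w$ not preserving the partition, Proposition \ref{XWY} combined with Corollary \ref{alpha k in ak} and the inter-cluster lower bound $u_{ij} \ge T$ yields an exponential suppression factor $e^{-cT}$, which, for $T$ large enough, absorbs such contributions into the constants of $\asymp$ exactly as in the proof of Proposition \ref{BIG}. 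Dividing by $\pi(\lambda)\pi(X)$, and noting that $1+u_{ij}\asymp 1$ for intra-cluster pairs while $1+u_{ij}\asymp u_{ij}$ for inter-cluster pairs, then yields \eqref{MAIN}.

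The main technical obstacle is the clustering step, because the relation ``$u_{ij}\le T$'' is \emph{not} transitive in general: two consecutive small pairs may combine into a large pair through the cross terms in
\begin{equation*}
u_{i,i+2} = u_{i,i+1}+u_{i+1,i+2}+(\lambda_i-\lambda_{i+1})(x_{i+1}-x_{i+2})+(\lambda_{i+1}-\lambda_{i+2})(x_i-x_{i+1}),
\end{equation*}
and those two cross terms need not be small. A workable partition has to be constructed more carefully --- for instance by clustering the sequences $\{\lambda_i\}$ and $\{x_i\}$ separately according to their consecutive gaps at a common length scale and then intersecting the two clusterings --- and the resulting $T'$ must be tracked uniformly in $n$. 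Once this combinatorial preparation is in place, the determinantal factorization together with Propositions \ref{E1}, \ref{BIG}, \ref{XWY} and Corollary \ref{alpha k in ak} supplies all remaining ingredients.
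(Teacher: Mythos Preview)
Your approach is entirely different from the paper's. The paper never attempts to control the alternating sum \eqref{spherical fn 2} outside the two extreme regimes of Propositions \ref{E1} and \ref{BIG}; instead it proceeds by induction on the rank through the iterative \emph{integral} representation \eqref{iter}. One rewrites $e^{-\lambda(X)}\pi(X)\pi(\lambda')\,\psi_\lambda(X)$ as a multiple integral $I^{(n)}$ with a manifestly positive integrand, proves an approximate factorization $I^{(n)}\asymp\prod_k I_k^{(n)}$ into one-dimensional integrals (Proposition \ref{iteriter}), and then a recursion (Proposition \ref{recursive}) expressing $I^{(n+1)}$ in terms of $I^{(n)}$ and $I^{(n-1)}$; the induction hypothesis then closes. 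The positivity of the integrand is what makes the lower bound tractable --- your route through the alternating sum has to confront cancellation head-on.

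The obstacle you flag is not a technicality but a genuine gap, and your suggested repair does not close it. Take $n=2$ with $\lambda_1-\lambda_2=M$, $\lambda_2-\lambda_3=1/M$, $x_1-x_2=1/M$, $x_2-x_3=M$ for $M$ large. Then $u_{12}=u_{23}=1$ while $u_{13}\asymp M^2$, so every consecutive partition has either an unbounded intra-cluster product (the trivial partition) or an inter-cluster product equal to $1$. Clustering $\lambda$ and $X$ separately at any common scale and intersecting yields only the trivial partition or all singletons, both of which fail. Concretely, for the partition $\{1,2\},\{3\}$ the transposition $(23)$ is not block-preserving, yet $\langle\lambda-(23)\lambda,X\rangle=u_{23}=1$, so its contribution $-e^{\lambda(X)-1}$ is of the same order as the block contribution $e^{\lambda(X)}(1-e^{-1})$; no choice of $T$ makes the absorption argument of Proposition \ref{BIG} go through. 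The lower bound therefore requires controlling cancellations among several \emph{comparable} terms of the Weyl sum, and nothing in Propositions \ref{XWY}--\ref{BIG} gives a handle on that. This is exactly what the paper's integral approach is designed to avoid.
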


\begin{corollary}\label{heatAn}
\begin{align*}
\phi_\lambda(e^X)&\asymp  e^{(\lambda-\rho)(X)}\,\prod_{i<j}\,\frac{1+x_i-x_j}{1+(x_i-x_j)\,(\lambda_i-\lambda_j)},\\
p_t^W(X,Y)&\asymp t^{-\frac{d}{2}}
\,\frac{e^{\frac{-|X-Y|^2}{4t}}}{\prod_{i<j}\,(t+(x_i-x_j)\,(y_i-y_j))},\\
\tilde{p}^W_t(X,Y)&\asymp 
e^{-|\rho|^2 t}\, t^{-\frac{d}{2}}\,e^{-\rho(X+Y)}
\,\prod_{i<j}\,\frac{(1+x_i-x_j)\,(1+y_i-y_j)}{(t+(x_i-x_j)\,(y_i-y_j))}\,e^{\frac{-|X-Y|^2}{4t}}.
\end{align*}
\end{corollary}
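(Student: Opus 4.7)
The plan is to derive all three estimates in Corollary~\ref{heatAn} directly from Theorem~\ref{main} by substitution into the three transfer identities already recorded in the paper: the flat/curved spherical identity \eqref{phipsi}, the heat-kernel-to-spherical-function relation \eqref{heatSpher1}, and the flat/curved heat-kernel identity \eqref{CurvedFlat}. In type $A_n$ every geometric factor is explicit: the positive roots are $e_i-e_j$ with $i<j$, so $\alpha(X)=x_i-x_j$ and $\pi(X)=\prod_{i<j}(x_i-x_j)$, and the complex-case asymptotic $\delta^{1/2}(X)\asymp e^{\rho(X)}\,\pi(X)/\prod_{\alpha>0}(1+\alpha(X))$ recorded in the Remark after Conjecture~\ref{conj} is available. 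Thus the whole proof is bookkeeping once \eqref{MAIN} is in hand; I do not anticipate any real obstacle, only a small variable check.

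For the first line, I would use \eqref{phipsi} in the form
\begin{equation*}
\phi_\lambda(e^X)=\frac{\pi(X)}{\delta^{1/2}(X)}\,\psi_\lambda(X)\asymp \frac{\prod_{\alpha>0}(1+\alpha(X))}{e^{\rho(X)}}\,\psi_\lambda(X),
\end{equation*}
and then plug in Theorem~\ref{main} with $\alpha(X)=x_i-x_j$, $\alpha(\lambda)=\lambda_i-\lambda_j$. The factors $e^{\langle\lambda,X\rangle}$ and $e^{-\rho(X)}$ combine into $e^{(\lambda-\rho)(X)}$, while the two products merge into $\prod_{i<j}(1+x_i-x_j)/(1+(x_i-x_j)(\lambda_i-\lambda_j))$, which is the claimed estimate.

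For the second line, I would apply \eqref{heatSpher1} with argument $Y/(2t)$. Since $X,Y\in\overline{\a}^+$ and $t>0$, the vector $Y/(2t)$ still lies in $\overline{\a}^+$, so Theorem~\ref{main} applies and gives
\begin{equation*}
\psi_X\!\left(\tfrac{Y}{2t}\right)\asymp \frac{e^{\langle X,Y\rangle/(2t)}}{\prod_{i<j}\bigl(1+(x_i-x_j)(y_i-y_j)/(2t)\bigr)}.
\end{equation*}
Clearing $2t$ from each factor in the denominator produces $(2t)^{-\gamma}\prod_{i<j}(2t+(x_i-x_j)(y_i-y_j))$; combining the Gaussian prefactor $e^{-(|X|^2+|Y|^2)/(4t)}$ with $e^{\langle X,Y\rangle/(2t)}$ collapses to $e^{-|X-Y|^2/(4t)}$; and the powers of $t$ reduce from $t^{-d/2-\gamma}\cdot(2t)^\gamma$ to a constant multiple of $t^{-d/2}$. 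The factor $2$ in $2t+(x_i-x_j)(y_i-y_j)$ is harmless, since $(x_i-x_j)(y_i-y_j)\ge 0$ on $\overline{\a}^+$ implies $t+a\asymp 2t+a$; it is absorbed into the $\asymp$ constants.

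The third line is then immediate from the second via \eqref{CurvedFlat}: the factor $\pi(X)\pi(Y)/(\delta^{1/2}(X)\delta^{1/2}(Y))$ is, by the same complex-case asymptotic used in the first step, equivalent to $e^{-\rho(X+Y)}\prod_{\alpha>0}(1+\alpha(X))(1+\alpha(Y))$, which in type $A_n$ reads $e^{-\rho(X+Y)}\prod_{i<j}(1+x_i-x_j)(1+y_i-y_j)$. Multiplying the flat estimate just obtained by this factor and by $e^{-|\rho|^2 t}$ yields exactly the stated expression for $\tilde{p}^W_t(X,Y)$.
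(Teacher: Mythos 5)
Your derivation is correct and is exactly the route the paper intends: the corollary is stated without explicit proof because it follows by substituting the $A_n$ estimate of Theorem~\ref{main} into the transfer identities \eqref{phipsi}, \eqref{heatSpher1}, and \eqref{CurvedFlat}, all of which are set up in Section~2 together with the complex-case asymptotic $\delta^{1/2}(X)\asymp e^{\rho(X)}\pi(X)/\prod_{\alpha>0}(1+\alpha(X))$. Your bookkeeping (in particular clearing $2t$ from the denominators, absorbing the harmless $2t+a\asymp t+a$ on $\overline{\a}^+$, and collapsing the Gaussian with $e^{\langle X,Y\rangle/(2t)}$) is accurate.
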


We recall (refer to \cite{Sawyer}) the following iterative formula for the spherical functions of type $A$ in the complex case.  Here we do not assume that the elements of the Lie algebra have trace 0. Here the Cartan subalgebra $\a$ for the root system $A_{n-1}$ is isomorphic to $\R^n$. For $\lambda,X\in{\overline{\a}^+} \subset \R^n$,
we have
\begin{align}
\psi_\lambda(e^X)&=e^{\lambda(X)}\ \hbox{if $n=1$ and}\nonumber\\
\psi_\lambda(e^X)&=(n-1)!\,e^{\lambda_n\,\sum_{k=1}^n\,x_k}\,(\prod_{i<j}\,(x_i-x_j))^{-1}\,\int_{x_n}^{x_{n-1}}\,\cdots\,\int_{x_2}^{x_1}
\,\psi_{\lambda_0}(e^Y)\label{iter}
\\&\qquad\qquad\qquad\qquad\qquad\qquad\qquad\qquad
\,\prod_{i<j<n}\,(y_i-y_j)\,dy_1\cdots dy_{n-1}\nonumber
\end{align}
where $\lambda_0(U)=\sum_{k=1}^{n-1}\,(\lambda_k-\lambda_n)\,u_k$.

\begin{remark}
Formula \eqref{iter} represents the action of the root system $A_{n-1}$ on $\R^n$.  If we assume $\sum_{k=1}^n\,x_k=0=\sum_{k=1}^n\,\lambda_k$, we have then the action of the root system $A_{n-1}$ on $\R^{n-1}$.  We can also consider the action of $A_{n-1}$ on any $\R^m$ with $m\geq n-1$ by considering formula \eqref{spherical fn 2} and deciding on which entries $x_k$, the Weyl group $W=S_n$ acts.  These considerations do not affect the conclusion of Theorem \ref{main}.
\end{remark}

\subsection{Approximate factorization for $A_n$}

Before proving the conjecture in the case $A_n$, we will prove an interesting ``factorization''.

\begin{proposition}\label{iteriter}
For $n\geq 1$, consider the root system $A_n$ on $\R^{n+1}$.
Let $\lambda,X\in{\overline{\a}^+}\subset \R^{n+1}$ and $X'=[X_1,\ldots,X_n].$
Define
\begin{align*}
I^{(n)}=I^{(n)}(\lambda;X)&=\int_{x_{n+1}}^{x_n} \int_{x_n}^{x_{n-1}}\,\cdots\,\int_{x_3}^{x_2}\,\int_{x_2}^{x_1}
\,e^{-\lambda_0(X'-Y)}
\\&\qquad
\,\prod_{i<j<n}\,\frac{(y_i-y_j)\,(\lambda_i-\lambda_j)}
{1+(y_i-y_j)\,(\lambda_i-\lambda_j)}\,dy_1\,dy_2\,\cdots\,dy_n.
\end{align*}
Then the following approximate factorization holds
\begin{equation}\label{factorI4}
I^{(n)}\asymp \prod_{k=1}^n\,I^{(n)}_k
\end{equation}
where
\begin{align*}
 I_1^{(n)}&=\int_{x_2}^{x_1}\,e^{-(\lambda_1-\lambda_{n+1})\,(x_1-y_1)}\,dy_1\ \hbox{and}\\
 I_k^{(n)}&=\int_{x_{k+1}}^{x_k}\,e^{-(\lambda_k-\lambda_{n+1})\,(x_k-y_k)}
\,\prod_{j=1}^{k-1}\,\frac{(x_j-y_k)\,(\lambda_j-\lambda_k)}{1+(x_j-y_k)\,(\lambda_j-\lambda_k)}
\,dy_k\ \hbox{for $1<k\leq n$}.
\end{align*}
\end{proposition}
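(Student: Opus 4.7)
The proposition asserts an approximate factorization of the coupled $n$-fold integral $I^{(n)}$ into a product of one-dimensional integrals. My plan is to prove the two directions of the comparison $I^{(n)} \asymp \prod_k I_k^{(n)}$ separately, with the lower bound being by far the harder.

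The upper bound is direct. Since $X,\lambda\in\overline{\a}^+$ one has $y_i\le x_i$ and $\lambda_i\ge \lambda_j$ for $i<j$, hence $(y_i-y_j)(\lambda_i-\lambda_j)\le (x_i-y_j)(\lambda_i-\lambda_j)$. The monotonicity of $f(u):=u/(1+u)$ then bounds each coupling factor in the integrand of $I^{(n)}$ by the corresponding factorized factor. The resulting integrand separates into $\prod_k g_k(y_k)$, where $g_k$ is precisely the integrand of $I_k^{(n)}$, so $I^{(n)}\le \prod_k I_k^{(n)}$.

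For the lower bound I would restrict the integration to the subregion $D^* := \{y : y_i\ge (x_i+x_{i+1})/2\text{ for }i=1,\dots,n-1\}$. For $i<j\le n$ and $y\in D^*$, monotonicity of $X$ gives $y_j\le x_j\le x_{i+1}$, so $x_i-y_j\ge x_i-x_{i+1}$; combined with $x_i-y_i\le (x_i-x_{i+1})/2$ this yields $y_i-y_j\ge (x_i-y_j)/2$. The elementary inequality $f(u/2)\ge f(u)/2$ (easily checked from $f(u)=u/(1+u)$) then gives $f((y_i-y_j)(\lambda_i-\lambda_j))\ge \tfrac12 f((x_i-y_j)(\lambda_i-\lambda_j))$ throughout $D^*$, so
\[
I^{(n)}\ge 2^{-\binom{n}{2}}\int_{D^*}e^{-\lambda_0(X'-Y)}\prod_{i<j\le n}f((x_i-y_j)(\lambda_i-\lambda_j))\,dy.
\]
The right-hand integrand separates, producing $I_n^{(n)}$ in full and the truncated integrals $\int_{(x_k+x_{k+1})/2}^{x_k}g_k(y_k)\,dy_k$ for $k<n$. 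The problem thus reduces to a one-dimensional half-interval comparison.

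What remains is to show, for each $k=1,\dots,n-1$, that $\int_{(x_k+x_{k+1})/2}^{x_k} g_k \gtrsim I_k^{(n)}$ with constant depending only on $n$. Writing $u=x_k-y_k$, $a=\lambda_k-\lambda_{n+1}$, $b_j=\lambda_j-\lambda_k$, $c_j=x_j-x_k$ and $L_k=x_k-x_{k+1}$, this amounts to an inequality $\int_0^{L_k} e^{-au}\prod_{j<k}f(b_j(c_j+u))\,du \le C_n \int_0^{L_k/2} e^{-au}\prod_{j<k}f(b_j(c_j+u))\,du$. The integrand is log-concave in $u$ (a direct computation shows $(d^2/du^2)\log f(b(c+u))<0$) and bounded above by $e^{-au}$, hence unimodal. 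The main obstacle is precisely this estimate: the exponential weight prefers $u=0$ while the increasing bounded factors $f(b_j(c_j+u))$ prefer $u=L_k$, so a priori the mass of $g_k$ may concentrate at either endpoint; naive bounds using either $f\le 1$ or a fixed comparison point break down in one regime or the other. The remedy is a case analysis on the relative sizes of $aL_k$ and of the values $b_j(c_j+L_k)$, exploiting that the number of factors $k-1\le n-1$ is bounded, which produces a constant $C_n>0$ independent of the continuous parameters and completes the proof.
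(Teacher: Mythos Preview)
Your upper bound and your reduction of the lower bound to a half-interval comparison are essentially identical to the paper's argument (the paper restricts all $n$ variables to the upper half-intervals rather than only the first $n-1$, but this is immaterial). The divergence, and the gap in your proposal, is precisely at the last step: showing that
\[
\int_{L_k/2}^{L_k} e^{-au}\prod_{j<k} f\bigl(b_j(c_j+u)\bigr)\,du \;\le\; C_n \int_0^{L_k/2} e^{-au}\prod_{j<k} f\bigl(b_j(c_j+u)\bigr)\,du.
\]
You assert that a case analysis on the sizes of $aL_k$ and $b_j(c_j+L_k)$ will produce such a constant, but you do not carry it out; log-concavity alone does not give the inequality (take $h(u)=e^{Mu}$), and the ``small $c_j$'' regime, where $f(b_jc_j)$ can vanish, makes a naive split delicate. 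As written this is a plan, not a proof.

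The paper closes this gap with a one-line substitution that you may have overlooked. In the left integral put $u=2w$; then $w\in[L_k/4,L_k/2]$, $e^{-2aw}\le e^{-aw}$, and since $c_j+2w\le 2(c_j+w)$ and $f(2s)\le 2f(s)$ one gets $f\bigl(b_j(c_j+2w)\bigr)\le 2f\bigl(b_j(c_j+w)\bigr)$. Hence
\[
\int_{L_k/2}^{L_k}\!\cdots\,du \;=\; 2\int_{L_k/4}^{L_k/2} e^{-2aw}\prod_{j<k} f\bigl(b_j(c_j+2w)\bigr)\,dw \;\le\; 2^{k}\int_0^{L_k/2} e^{-aw}\prod_{j<k} f\bigl(b_j(c_j+w)\bigr)\,dw,
\]
which is exactly the missing inequality with $C_n=2^n$. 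This doubling trick replaces your proposed case analysis entirely and completes the proof.
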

\begin{proof}
Since $u/(1+u)$ is an increasing function, we clearly have
\begin{align*}
I^{(n)}&\leq\int_{x_{n+1}}^{x_n} \int_{x_n}^{x_{n-1}}\,\cdots\,\int_{x_3}^{x_2}\,\int_{x_2}^{x_1}
\,e^{-\lambda_0(X'-Y)}\,\prod_{i<j<n}\,\frac{(x_i-y_j)\,(\lambda_i-\lambda_j)}
{1+(x_i-y_j)\,(\lambda_i-\lambda_j)}\,dy_1\,dy_2\,\cdots\,dy_n.
\end{align*}
  On the other hand,
\begin{align*}
I^{(n)}&\geq\int_{(x_n+x_{n+1})/2}^{x_n} \int_{(x_{n-1}+x_n)/2}^{x_{n-1}}\,\cdots\,\int_{(x_2+x_3)/2}^{x_2}\,\int_{(x_1+x_2)/2}^{x_1}
\,e^{-\lambda_0(X'-Y)}
\\&
\,\prod_{i<j<n}\,\frac{(y_i-y_j)\,(\lambda_i-\lambda_j)}
{1+(y_i-y_j)\,(\lambda_i-\lambda_j)}\,dy_1\,dy_2\,\cdots\,dy_n\\
&\geq\int_{(x_n+x_{n+1})/2}^{x_n} \int_{(x_{n-1}+x_n)/2}^{x_{n-1}}\,\cdots\,\int_{(x_2+x_3)/2}^{x_2}\,\int_{(x_1+x_2)/2}^{x_1}
\,e^{-\lambda_0(X'-Y)}
\\&
\,\prod_{i<j<n}\,\frac{((x_i+x_{i+1})/2-y_j)\,(\lambda_i-\lambda_j)}
{1+((x_i+x_{i+1})/2-y_j)\,(\lambda_i-\lambda_j)}\,dy_1\,dy_2\,\cdots\,dy_n\\
&\asymp \int_{(x_n+x_{n+1})/2}^{x_n} \int_{(x_{n-1}+x_n)/2}^{x_{n-1}}\,\cdots\,\int_{(x_2+x_3)/2}^{x_2}\,\int_{(x_1+x_2)/2}^{x_1}
\,e^{-\lambda_0(X'-Y)}
\\&\qquad
\,\prod_{i<j<n}\,\frac{(x_i-y_j)\,(\lambda_i-\lambda_j)}
{1+(x_i-y_j)\,(\lambda_i-\lambda_j)}\,dy_1\,dy_2\,\cdots\,dy_n=\\
&
=\prod_{k=1}^n
\int_{(x_k+x_{k+1})/2}^{x_{k}}\,e^{-(\lambda_k-\lambda_{n+1})\,(x_k-y_k)}
\,\prod_{j=1}^{k-1}\,\frac{(x_j-y_k)\,(\lambda_j-\lambda_k)}{1+(x_j-y_k)\,(\lambda_j-\lambda_k)}
\,dy_k
=\prod_{k=1}^n\,A^{(n)}_k
\end{align*}
since 
\begin{align*}
\frac{((x_i+x_{i+1})/2-y_j)\,(\lambda_i-\lambda_j)}{1+((x_i+x_{i+1})/2-y_j)\,(\lambda_i-\lambda_j)}
\leq \frac{(x_i-y_j)\,(\lambda_i-\lambda_j)}{1+(x_i-y_j)\,(\lambda_i-\lambda_j)}
\end{align*}
while
\begin{align*}
\frac{((x_i+x_{i+1})/2-y_j)\,(\lambda_i-\lambda_j)}{1+((x_i+x_{i+1})/2-y_j)\,(\lambda_i-\lambda_j)}
\geq\frac{((x_i-y_j)/2\,(\lambda_i-\lambda_j)}{1+(x_i-y_j)/2\,(\lambda_i-\lambda_j)}
\geq \frac{1}{2}\, \frac{(x_i-y_j)\,(\lambda_i-\lambda_j)}{1+(x_i-y_j)\,(\lambda_i-\lambda_j)}.
\end{align*}

Now, let 
\begin{align*}
B^{(n)}_k&=\int_{x_{k+1}}^{(x_k+x_{k+1})/2}\,e^{-(\lambda_k-\lambda_{n+1})\,(x_k-y_k)}
\,\prod_{j=1}^{k-1}\,\frac{(x_j-y_k)\,(\lambda_j-\lambda_k)}{1+(x_j-y_k)\,(\lambda_j-\lambda_k)}
\,dy_k
\end{align*}
and note that $I^{(n)}_k=A^{(n)}_k+B^{(n)}_k$.

Now, using the change of variable $2w=x_k-y_k$, we have
\begin{align*}
B^{(n)}_k&=2\,\int_{(x_k-x_{k+1})/4}^{(x_k-x_{k+1})/2}\,e^{-2\,(\lambda_k-\lambda_{n+1})\,w}
\,\prod_{j=1}^{k-1}\,\frac{(x_j-x_k+2\,w)\,(\lambda_j-\lambda_k)}{1+(x_j-x_k+2\,w)\,(\lambda_j-\lambda_k)}
\,dw\\
&\leq 4\,\int_{(x_k-x_{k+1})/4}^{(x_k-x_{k+1})/2}\,e^{-2\,(\lambda_k-\lambda_{n+1})\,w}
\,\prod_{j=1}^{k-1}\,\frac{(x_j-x_k+w)\,(\lambda_j-\lambda_k)}{1+(x_j-x_k+w)\,(\lambda_j-\lambda_k)}
\,dw\\
&\leq 4\,\int_{0}^{(x_k-x_{k+1})/2}\,e^{-(\lambda_k-\lambda_{n+1})\,w}
\,\prod_{j=1}^{k-1}\,\frac{(x_j-x_k+w)\,(\lambda_j-\lambda_k)}{1+(x_j-x_k+w)\,(\lambda_j-\lambda_k)}
\,dw=4\,A^{(n)}_k,
\end{align*}
where the last equality comes from the change of variable $w=x_k-y_k$ in the expression for $A^{(n)}_{k}$.
Therefore $I^{(n)}_k=A^{(n)}_k+B^{(n)}_k\leq 5\,A^{(n)}_k$.  The result follows.
\end{proof}

The next proposition gives an inductive way of estimating $I^{(n+1)}$, knowing $I^{(n)}$ and $I^{(n-1)}$.

\begin{proposition}\label{recursive}
Consider the root system $A_{n+1}$ on $\R^{n+2}$. Let
$\lambda,X\in{\overline{\a}^+}\subset \R^{n+2}$.
Assume $\alpha_1(X)\geq \alpha_{n+1}(X)$.  Then
\begin{align*}
I^{(n+1)}(\lambda;X)&\asymp 
 I^{(n)}(\lambda_1  ,\ldots ,\lambda_n,\lambda_{n+2};x_1,\ldots,x_{n+1})
\frac{(x_1-x_{n+1})(\lambda_1-\lambda_{n+1}) }{1+(x_1-x_{n+1})(\lambda_1-\lambda_{n+1}) }
 \\&\qquad
\frac{I^{(n)}(\lambda_2  ,\ldots ,\lambda_{n+1},\lambda_{n+2};x_2,\ldots,x_{n+2})}
{I^{(n-1)}(\lambda_2  ,\ldots ,\lambda_{n},\lambda_{n+2};x_2,\ldots,x_{n+1})}.
\end{align*}
\end{proposition}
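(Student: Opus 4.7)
The plan is to apply the approximate factorization from Proposition \ref{iteriter} to each of the four quantities $I^{(n+1)}$, $J^{(n)} := I^{(n)}(\lambda_1,\ldots,\lambda_n,\lambda_{n+2};x_1,\ldots,x_{n+1})$, $K^{(n)} := I^{(n)}(\lambda_2,\ldots,\lambda_{n+1},\lambda_{n+2};x_2,\ldots,x_{n+2})$ and $L^{(n-1)} := I^{(n-1)}(\lambda_2,\ldots,\lambda_n,\lambda_{n+2};x_2,\ldots,x_{n+1})$, expressing each as a product of single-variable integrals $I_k^{(\cdot)}$, and then identify which elementary factors match between the numerator and denominator of the ratio.

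The key combinatorial observation is that the first $n$ factors in the factorization of $J^{(n)}$ coincide on the nose with the factors $I_k^{(n+1)}$ of $I^{(n+1)}$ for $k = 1, \ldots, n$: the role of the final $\lambda$-coordinate is played by $\lambda_{n+2}$ in both, and $J^{(n)}$ simply omits the integration over $y_{n+1}$. Hence
\begin{equation*}
\frac{I^{(n+1)}}{J^{(n)}} \asymp I_{n+1}^{(n+1)}.
\end{equation*}
Similarly, $K^{(n)}$ and $L^{(n-1)}$ are both obtained by the shift $k \mapsto k+1$ applied to the original variables, and $L^{(n-1)}$ merely drops the last integration, so their factorizations share identical first $n-1$ factors and
\begin{equation*}
\frac{K^{(n)}}{L^{(n-1)}} \asymp K_n^{(n)}.
\end{equation*}
This reduces the proposition to the single statement
\begin{equation*}
I_{n+1}^{(n+1)} \asymp \frac{(x_1-x_{n+1})(\lambda_1-\lambda_{n+1})}{1+(x_1-x_{n+1})(\lambda_1-\lambda_{n+1})}\,K_n^{(n)}.
\end{equation*}

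For this last step, both $I_{n+1}^{(n+1)}$ and $K_n^{(n)}$ are integrals over $y \in [x_{n+2}, x_{n+1}]$ with the same exponential weight $e^{-(\lambda_{n+1}-\lambda_{n+2})(x_{n+1}-y)}$; after reindexing $j \mapsto j-1$ in the product defining $K_n^{(n)}$, the integrands agree term by term for $j = 2, \ldots, n$ and differ only by the extra $j = 1$ factor
\begin{equation*}
F(y) = \frac{(x_1-y)(\lambda_1-\lambda_{n+1})}{1+(x_1-y)(\lambda_1-\lambda_{n+1})}
\end{equation*}
appearing in $I_{n+1}^{(n+1)}$. It remains to show $F(y) \asymp F(x_{n+1})$ uniformly on the integration interval, which is exactly where the hypothesis $\alpha_1(X) \geq \alpha_{n+1}(X)$, i.e.\ $x_1 - x_2 \geq x_{n+1} - x_{n+2}$, is essential: combined with $x_2 \geq x_{n+1}$ it yields $x_1 - x_{n+2} \leq 2(x_1 - x_{n+1})$, and the monotonicity of $u/(1+u)$ together with the elementary inequality $2u/(1+2u) \leq 2\,u/(1+u)$ gives $F(x_{n+1}) \leq F(y) \leq 2\,F(x_{n+1})$. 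Pulling $F(x_{n+1})$ out of the integral then completes the argument. The main obstacle is the careful bookkeeping of the three different index shifts so that the factor matching is exact and only $I_{n+1}^{(n+1)}$ and $K_n^{(n)}$ remain uncancelled; once this is verified the analytic content is the elementary estimate on $F$, which uses only the stated geometric hypothesis on $X$.
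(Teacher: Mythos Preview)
Your proposal is correct and follows essentially the same approach as the paper: apply Proposition \ref{iteriter} to factor $I^{(n+1)}$, identify the first $n$ factors with those of $I^{(n)}(\lambda_1,\ldots,\lambda_n,\lambda_{n+2};x_1,\ldots,x_{n+1})$, pull the $j=1$ term out of $I^{(n+1)}_{n+1}$ using $x_1-x_{n+1}\leq x_1-y\leq 2(x_1-x_{n+1})$ (which is where the hypothesis $\alpha_1(X)\geq\alpha_{n+1}(X)$ enters), and recognize the remaining integral as the last factor $K^{(n)}_n$, expressible as the ratio $K^{(n)}/L^{(n-1)}$ via Proposition \ref{iteriter} again. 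The only difference is organizational: you factor all four quantities at the outset and match factors, whereas the paper proceeds sequentially through steps (i)--(iv); the analytic content and the bookkeeping are identical.
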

	\begin{proof}

We start with an outline of  the proof.
\begin{itemize}
\item[(i)] $I^{(n+1)}$ is estimated by a product of $n+1$ factors $I^{(n+1)}_k(\lambda;X)$.
\item[(ii)] The product of the first $n$ factors $I^{(n+1)}_1(\lambda;X)$, \ldots, $I^{(n+1)}_n(\lambda;X)$
give an estimate of the term $I^{(n)}(\lambda_1,\dots,\lambda_n,\lambda_{n+2};X')$ by Proposition \ref{iteriter}.
\item[ (iii)] In the last factor $I^{(n+1)}_{n+1}(\lambda;X)$, we ``draw off''
one term from under the integral, using the additional hypothesis
$\alpha_1(X)\geq \alpha_{n+1}(X)$.  The remaining integral corresponds to $I^{(n)}_n(\lambda_2,\dots,\lambda_{n+2};x_2,\dots,x_{n+2})$.
\item[(iv)] The last factor $I^{(n)}_n$ of $I^{(n)}$  is estimated by $I^{(n)}/I^{(n-1)}$, up to a change of variables (we re-use the idea of (ii)).
\end{itemize}

Since $x_{n+2}\le y_{n+1}\le  x_{n+1}$ and $x_{n+1}-x_{n+2}\le x_{1}-x_{2} $, we get 
$x_1-x_{n+1}\le x_1-y_{n+1}\le  x_1-x_{n+2}\le 2(x_1-x_{n+1})$ and we have
\begin{align*}
I^{(n+1)}_{n+1}&\asymp
\int_{x_{n+2}}^{x_{n+1}}\,e^{-(\lambda_{n+1}-\lambda_{n+2})\,(x_{n+1}-y_{n+1})}
\,\frac{(x_1-y_{n+1})\,(\lambda_1-\lambda_{n+1})}{1+(x_1-y_{n+1})\,(\lambda_1-\lambda_{n+1})}
\\&
\,\prod_{j=2}^{n}\,\frac{(x_j-y_{n+1})\,(\lambda_j-\lambda_{n+1})}{1+(x_j-y_{n+1}})
\,(\lambda_j-\lambda_{n+1})\,dy_{n+1}\\
&\asymp
\frac{(x_1-x_{n+1})(\lambda_1-\lambda_{n+1}) }{1+(x_1-x_{n+1})(\lambda_1-\lambda_{n+1}) }
\,\int_{x_{n+2}}^{x_{n+1}}\,e^{-(\lambda_{n+1}-\lambda_{n+2})\,(x_{n+1}-y_{n+1})}
\\&
\,\prod_{j=2}^{n}\,\frac{(x_j-y_{n+1})\,(\lambda_j-\lambda_{n+1})}{1+(x_j-y_{n+1})
\,(\lambda_j-\lambda_{n+1})}\,dy_{n+1}.
\end{align*}

Hence, noting that $I^{(n+1)}_1(\lambda;X)\cdots I^{(n+1)}_n(\lambda;X)
\asymp  I^{(n)}(\lambda_1,\dots,\lambda_n,\lambda_{n+2};X')$, we have
\begin{align*}
I^{(n+1)}(\lambda;X)
&\asymp
I^{(n)}(\lambda_1,\dots,\lambda_n,\lambda_{n+2};X')\,
\frac{(x_1-x_{n+1})\,(\lambda_1-\lambda_{n+1})}{1+(x_1-x_{n+1})\,(\lambda_1-\lambda_{n+1})}
\\&
\int_{x_{n+2}}^{x_{n+1}}\,e^{-(\lambda_{n+1}-\lambda_{n+2})\,(x_{n+1}-y_{n+1})}
\,\prod_{j=2}^{n}\,\frac{(x_j-y_{n+1})\,(\lambda_j-\lambda_{n+1})}{1+(x_j-y_{n+1})
\,(\lambda_j-\lambda_{n+1})}\,dy_{n+1}.
\end{align*}
Finally,
\begin{align*}
\lefteqn{\int_{x_{n+2}}^{x_{n+1}}\,e^{-(\lambda_{n+1}-\lambda_{n+2})\,(x_{n+1}-y_{n+1})}
\,\prod_{j=2}^{n}\,\frac{(x_j-y_{n+1})\,(\lambda_j-\lambda_{n+1})}{1+(x_j-y_{n+1})
\,(\lambda_j-\lambda_{n+1})}\,dy_{n+1}}\\
&=\frac{
\prod_{k=1}^n\,\int_{x_{k+2}}^{x_{k+1}}\,e^{-(\lambda_{k+1}-\lambda_{n+2})\,(x_{k+1}-y_{k+1})}
\,\prod_{j=1}^{k-1}\,\frac{(x_{j+1}-y_{k+1})\,(\lambda_{j+1}-\lambda_{k+1})}
{1+(x_{j+1}-y_{k+1})\,(\lambda_{j+1}-\lambda_{k+1})}\,dy_{k+1}
}
{
\prod_{k=1}^{n-1}\,\int_{x_{k+2}}^{x_{k+1}}\,e^{-(\lambda_{k+1}-\lambda_{n+2})\,(x_{k+1}-y_{k+1})}
\,\prod_{j=1}^{k-1}\,\frac{(x_{j+1}-y_{k+1})\,(\lambda_{j+1}-\lambda_{k+1})}{1+(x_{j+1}-y_{k+1})\,(\lambda_{j+1}-\lambda_{k+1})}
\,dy_{k+1}
}\\
&=\frac{I^{(n)}(\lambda_2  ,\ldots ,\lambda_{n+1},\lambda_{n+2};x_2,\ldots,x_{n+2})}
{I^{(n-1)}(\lambda_2  ,\ldots ,\lambda_{n},\lambda_{n+2};x_2,\ldots,x_{n+1})}.
\end{align*}
\end{proof}

\begin{remark}
When $n=1$, the result
of Proposition
\ref{recursive}
remains valid if we set $I^{(0)}=1$.
\end{remark}

We now prove our main result.

\begin{proof}[Proof of Theorem \ref{main}]
We use induction on the rank. In the case of $A_1$, we have
\begin{align*}
\psi_\lambda(e^X)
&=e^{\lambda_2\,(x_1+x_2)}\,(x_1-x_2)^{-1}\,\int_{x_2}^{x_1}\,e^{(\lambda_1-\lambda_2)\,y}\,dy\\
&
 =e^{\lambda_2\,(x_1+x_2)}\,(x_1-x_2)^{-1}\,\frac{e^{(\lambda_1-\lambda_2)\,x_1}-e^{(\lambda_1-\lambda_2)\,x_2}}{\lambda_1-\lambda_2}\\
&=e^{\lambda_1\,x_1+\lambda_2\,x_2}\,\frac{1-e^{-(\lambda_1-\lambda_2)\,(x_1-x_2)}}{(\lambda_1-\lambda_2)\,(x_1-x_2)}
\asymp e^{\lambda_1\,x_1+\lambda_2\,x_2}\,\frac{1}{1+(\lambda_1-\lambda_2)\,(x_1-x_2)}
\end{align*}
since $1-e^{-u}\asymp u/(1+u)$ for $u\geq 0$.

Assume that the result is true for $A_r$, $1\leq r\leq n$, $n\geq 1$.  Using \eqref{iter} and the induction hypothesis, we have for $r=1$, \dots, $n+1$
and $\lambda,X$ in positive Weyl chamber in $\R^{r+1}$
\begin{align*}
\lefteqn{\pi(X)\,\pi(\lambda')\,e^{-\lambda(X)}\,\psi_\lambda([x_1,\dots,x_r,x_{r+1}])}\\ 
&=
r! \,\pi(\lambda')\,e^{-\lambda(X)}
\,e^{\lambda_{r+1}\,\sum_{k=1}^{r+1}\,x_k}\,
\int_{x_{r+1}}^{x_r}\cdots\int_{x_2}^{x_1}
\psi_{\lambda_0}(e^Y)\,\prod_{i<j<r+1}\,(y_i-y_j)\,dy_1\cdots dy_{r}\\
&\asymp
\int_{x_{r+1}}^{x_r} \,\int_{x_r}^{x_{r-1}}\cdots\int_{x_3}^{x_2}\,\int_{x_2}^{x_1}
\,e^{-\lambda_0(X'-Y)}\,\prod_{i<j<r+1}\,\frac{(y_i-y_j)\,(\lambda_i-\lambda_j)}
{1+(y_i-y_j)\,(\lambda_i-\lambda_j)}\,dy_1\,dy_2\,\cdots\,dy_r
\end{align*}
where $X'=\diag[x_1,\dots,x_{r}]$ and $\lambda'=[\lambda_1,\dots,\lambda_{r}]$.  Using the notation introduced in Proposition  \ref{iteriter}, we have
\begin{align*}
\pi(X)\,\pi(\lambda') \,e^{-\lambda(X)}\,\psi_{[\lambda_1,\dots,\lambda_{r+1}]}([x_1,\dots,x_{r+1}]) =r!\,
I^{(r)}(\lambda_1,\dots,\lambda_{r+1},x_1,\dots,x_{r+1}).
\end{align*}

Still using the induction hypothesis, we have
\begin{align}
\pi(X)\,\pi(\lambda')\,e^{-\lambda(X)}\,\psi_{[\lambda_1,\dots,\lambda_{r+1}]}([x_1,\dots,x_{r+1}] &=
r!\,I^{(r)}(\lambda_1,\dots,\lambda_{r+1};x_1,\dots,x_{r+1})\nonumber\\
&\asymp 
\frac{\pi(X)\,\pi(\lambda')}{\prod_{i<j\leq r+1}\,(1+(\lambda_i-\lambda_j)\,(x_i-x_j))}\label{iter++}
\end{align}
for $r=1$, \dots, $n$.

It remains to show that \eqref{iter++} holds for $r=n+1$, {\it i.e.}  that
\begin{align*}
I^{(n+1)}(\lambda_1,\dots,\lambda_{n+2};x_1,\dots,x_{n+2}) \asymp 
\frac{\pi(X)\,\pi(\lambda')}{\prod_{i<j\leq n+2}\,(1+(\lambda_i-\lambda_j)\,(x_i-x_j))}.
\end{align*}

It is sufficient to prove the last formula under the hypothesis that
 $\alpha_1(X)\geq {\alpha_{n+1}}(X)$ since the case $\alpha_1(X)\leq {\alpha_{n+1}}(X)$ is symmetric.
Now, according to Proposition \ref{recursive} and \eqref{iter++},
\begin{align*}
I^{(n+1)}(\lambda;X)&\asymp 
\frac{(x_1-x_{n+1})(\lambda_1-\lambda_{n+1}) }{1+(x_1-x_{n+1})(\lambda_1-\lambda_{n+1}) }
\, I^{(n)}(\lambda_1  ,\ldots ,\lambda_n,\lambda_{n+2};x_1,\ldots,x_{n+1})
\\&\qquad
I^{(n)}(\lambda_2  ,\ldots ,\lambda_{n+1},\lambda_{n+2};x_2,\ldots,x_{n+2})
\left(I^{(n-1)}(\lambda_2  ,\ldots ,\lambda_{n},\lambda_{n+2};x_2,\ldots,x_{n+1})\right)^{-1}\\
&\asymp 
\frac{(x_1-x_{n+1})(\lambda_1-\lambda_{n+1}) }{1+(x_1-x_{n+1})(\lambda_1-\lambda_{n+1}) }
\\&\qquad
\frac{\prod_{i<j\leq n+1}\,(x_i-x_j)\,\prod_{i<j< n+1}\,(\lambda_i-\lambda_j)}{\prod_{i<j\leq n}\,(1+(x_i-x_j)\,(\lambda_i-\lambda_j))\,\prod_{i=1}^n\,(1+(x_i-x_{n+1})\,(\lambda_i-\lambda_{n+2}))}
\\&\qquad
\frac{\prod_{1<i<j\leq n+2}\,(x_i-x_j)\,\prod_{1<i<j\leq n+1}\,(\lambda_i-\lambda_j)}{\prod_{1<i<j\leq n+2}\,(1+(x_i-x_j)\,(\lambda_i-\lambda_j))}
\\&\qquad
\,\frac{\prod_{1<i<j\leq n}\,(1+(x_i-x_j)\,(\lambda_i-\lambda_j))\,\prod_{i=2}^n\,(1+(x_i-x_{n+1})\,(\lambda_i-\lambda_{n+2}))}
{\prod_{1<i<j\leq n+1}\,(x_i-x_j)\,\prod_{1<i<j< n+1}\,(\lambda_i-\lambda_j)}\\
&=\frac{x_1-x_{n+1}}{x_1-x_{n+2}}\,\frac{1+(x_1-x_{n+1})\,(\lambda_1-\lambda_2)}{1+(x_1-x_{n+1})\,(\lambda_1-\lambda_2)}
\,\frac{\prod_{i<j\leq n+2}\,(x_i-x_j)\,\prod_{i<j\leq n+1}\,(\lambda_i-\lambda_j)}{\prod_{i<j\leq n+2}\,(1+(\lambda_i-\lambda_j)\,(x_i-x_j))}.
\end{align*}

The result follows since $x_1-x_{n+1}\asymp x_1-x_{n+2}$ given that $x_1-x_2\geq x_{n+1}-x_{n+2}$.
\end{proof}

\section{Comparison with the estimates of Anker et al.{} in \cite{AnkerDH}. Conjecture for Dunkl setting}

In \cite [Theorems 4.1 p.{} 2372 and 4.4, p.{} 2377]{AnkerDH} the following estimates were proven for the heat kernel $p_t(X,Y)$ in the Dunkl setting on $\R^n$.  There exists positive constants 
$c_1$, $c_2$, $C_1$ and $C_2$ such that for all  $X,Y\in \overline{\a^+}$ 
\begin{equation}\label{anker}
 \frac{C_1 e^{-c_1|X-Y|^2/t}}{\min\{
 w(B(X,\sqrt{t})),  w(B(Y,\sqrt{t})) \}}\le p_t(X,Y) \le  \frac{
 C_2 e^{-c_2|X-Y|^2/t}}{\max\{
 w(B(X,\sqrt{t})),  w(B(Y,\sqrt{t})) \}}
\end{equation}
where $w$ is the $W$-invariant reference measure (in our paper $w=\pi(X)^2 dX$)  and
the $w$-volume of a  ball  satisfies the estimate (\cite[p.{} 2365]{AnkerDH})
\begin{align*}
w(B(X,r)) \asymp r^n \prod_{\alpha>0}( r+ \alpha(X) )^{2k(\alpha)}.
\end{align*}
The same  estimates follow for
$p_t^W(X,Y)$.
Our sharp estimates  in Corollary \ref{heatAn}
for $k(\alpha)=1$  in the $W$-radial case $A_n$  suggest that
$c_1=c_2=1/4$ in \eqref{anker} 
and that products of terms $(t+\alpha(X)\alpha(Y))^{k(\alpha)}$ are natural in place of separate 
terms $w(B(X,\sqrt{t}))$ and $  w(B(Y,\sqrt{t}))$.
On the other hand, estimates  \eqref{anker}  and  in Corollary \ref{heatAn}
suggest that the following conjecture is true in the Dunkl setting.
\begin{conj}
The Weyl-invariant heat kernel
for a root system $\Sigma$ in $\R^d$
satisfies the following estimates
\begin{equation}\label{HEATconj}
p_t^W(X,Y)
\asymp t^{-\frac{d}{2}}
\,\frac{e^{\frac{-|X-Y|^2}{4t}}}{\prod_{\alpha>0}\,(t+ \alpha(X)\alpha(Y))^{k(\alpha)}}.
\end{equation}
Formula \eqref{heatSpher} then implies that the $W$-invariant Dunkl kernel satisfies the estimate
\begin{align*}
E_k^W(X,Y) \asymp \frac{e^{\lambda(X)}}{\prod_{\alpha>0}\,(1+\alpha(X)\,\alpha(\lambda))^{k(\alpha)}}.
\end{align*}
\end{conj}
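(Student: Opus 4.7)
The plan is to first reduce the two bounds in the conjecture to a single one using the R\"osler formula \eqref{heatSpher}, and then to imitate, in the full Dunkl framework, the regime-by-regime analysis developed in Section \ref{Spherical} for the geometric case $k=1$. Since \eqref{heatSpher} is valid for $W$-invariant kernels, proving the heat-kernel estimate \eqref{HEATconj} is equivalent to proving
$$E_k^W(X,Y) \asymp \frac{e^{\langle X,Y\rangle}}{\prod_{\alpha>0}(1+\alpha(X)\alpha(Y))^{k(\alpha)}}, \qquad X,Y\in\overline{\a^+},$$
and I would work with the latter formulation; the constants $c_1=c_2=1/4$ in \eqref{anker} then fall out automatically.

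Next, I would split $\overline{\a^+}\times\overline{\a^+}$ according to the sizes of $q_\alpha(X,Y):=\alpha(X)\alpha(Y)$. In the regime where every $q_\alpha\le\delta$, R\"osler's probabilistic representation $E_k^W(X,Y)=\int e^{\langle\xi,Y\rangle}\,d\mu_X^W(\xi)$ combined with Proposition \ref{XWY} and Remark \ref{M} yields, exactly as in Proposition \ref{E1} (whose argument is noted to carry over to any Dunkl setting), the bound $E_k^W(X,Y)\asymp e^{\langle X,Y\rangle}$, matching the conjecture up to constants. In the opposite regime where every $q_\alpha$ is large and the multiplicity is geometric (so that $E_k^W$ admits an alternating-sum expansion in the spirit of \eqref{spherical fn 2}), the argument of Proposition \ref{BIG} transfers and produces the factor $\prod_\alpha(\alpha(X)\alpha(Y))^{-k(\alpha)}$. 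The substantive task is to handle intermediate configurations: given $(X,Y)$, partition the positive roots into a small set $S=\{\alpha:q_\alpha<\delta\}$ and a large set $L=\{\alpha:q_\alpha\ge\delta\}$, and develop an inductive reduction that peels off one root of $L$ at a time, each step contributing precisely the factor $(1+\alpha(X)\alpha(Y))^{-k(\alpha)}$ and reducing to a Dunkl kernel attached to a subsystem of rank one less. This is the Dunkl counterpart of the approximate factorization and recursion encapsulated in Propositions \ref{iteriter} and \ref{recursive}.

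The principal obstacle is that the proof of Theorem \ref{main} depends essentially on the one-dimensional integral recursion \eqref{iter}, which itself rests on the Harish--Chandra formula specific to the complex case of type $A$. No comparable quadrature is available for the other classical types, for the exceptional types, or for non-geometric values of $k(\alpha)$, and the combinatorial cancellations that yield the factors $(1+(\lambda_i-\lambda_j)(x_i-x_j))$ in Proposition \ref{iteriter} would, in the general Dunkl setting, have to come from the intertwining algebra of the Dunkl operators rather than from a change of variables under an iterated integral. I would attack this along two complementary routes: for the upper bound, use the heat semigroup together with the on-diagonal estimates of \cite{AnkerDH}, sharpened by Harnack-type inequalities to pin down the constants to $1/4$; for the lower bound, attempt to construct an explicit integral representation of $E_k^W$ over flags of parabolic subsystems that iterates one simple reflection at a time, built from R\"osler's measure and the Dunkl intertwiner. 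The lower bound in the intermediate regime, where the full product structure must emerge from first principles rather than from a known closed formula, appears to be the hardest single ingredient.
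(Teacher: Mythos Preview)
The statement you are addressing is not a theorem in the paper but an open \emph{conjecture}: the authors do not supply a proof, and indeed they present it precisely as a conjecture motivated by (i) their sharp $A_n$ result for $k=1$ (Theorem \ref{main} and Corollary \ref{heatAn}) and (ii) the general but non-sharp bounds \eqref{anker} of Anker--Dziuba\'nski--Hejna. There is therefore no paper proof to compare your proposal against.

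As a research outline your plan is sensible and you correctly identify the equivalence via \eqref{heatSpher}, the transferability of Proposition \ref{E1} to the general Dunkl setting (this is exactly the content of the remark following it), and the fact that the decisive ingredient in the $A_n$ proof is the iterated one-dimensional recursion \eqref{iter}. But your proposal is not a proof: the ``intermediate regime'' step, where you propose to peel off one root at a time and gain a factor $(1+\alpha(X)\alpha(Y))^{-k(\alpha)}$ per step, is precisely the missing idea. In the paper this works for $A_n$ only because of the explicit Harish--Chandra-type integral \eqref{iter} and the approximate factorization of Proposition \ref{iteriter}; you yourself note that no analogue is known for other root systems or for non-geometric $k(\alpha)$. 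Your two suggested routes (Harnack sharpening of \cite{AnkerDH} for the upper bound, and a flag-iterated R\"osler-measure construction for the lower bound) are plausible directions but neither is carried out, and the lower bound in particular remains a genuine open problem. In short, what you have written is an accurate diagnosis of why the conjecture is hard, together with a reasonable plan of attack, but not a proof---which is consistent with the paper's own position that this is unresolved.
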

\section{Additional formulas for $p_t^W(X,Y)$}

Let us finish by giving  formulas relating the heat kernel $p_t^W(X,Y)$ with the  spherical functions $\psi_{i\lambda}$ and $\phi_{i\lambda}$.  These formulas can be useful in further study of the kernel $p_t^W(X,Y)$.

\begin{proposition}\label{HEAT-inverseFourier}
\begin{itemize}
\item[(a)]  In the flat Riemannian  symmetric case, the following formula holds:
\begin{equation}\label{heat-inverseFourier}
p_t^W(X,Y)=C\,\int_{\a} e^{-|\lambda|^2t} \,\psi_{i\,\lambda}(X)\,\psi_{-i\,\lambda}(Y) \,\pi(\lambda)^2\,d\lambda,\qquad C>0.
\end{equation}
\item[(b)] In the curved  non-compact Riemannian  symmetric case
the following formula holds
\begin{equation}\label{heat-inverseFourierGeneral}
p_t^W(X,Y)=C\,\int_{\a^*}\, e^{-(|\lambda|^2+|\rho|^2)\,t} \,\phi_{i\,\lambda}(X)\,\phi_{-i\,\lambda}(Y) \,\frac{d\lambda}{|c(\lambda)|^2}
\end{equation}
where $c(\lambda)$ is the Harish-Chandra $c$-function (refer to \cite{Helgason3} for details).
The constant $C$ can be given explicitly.
\end{itemize}
\end{proposition}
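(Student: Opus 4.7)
The plan is to derive both formulas from the spherical Plancherel (inverse Fourier) theorem on the relevant symmetric space, combined with the fact that the spherical functions diagonalize the corresponding radial Laplacian.

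For part (a), I would first recall that on the flat symmetric space $M = \p$, the spherical functions $\psi_{i\lambda}$ are joint eigenfunctions of the invariant differential operators; in particular, the Euclidean Laplacian $L_M$ satisfies $L_M \psi_{i\lambda} = -|\lambda|^2 \psi_{i\lambda}$. The spherical Plancherel theorem for $M$ (which follows from the contraction of the Harish-Chandra theory to the Cartan motion group, or directly from Helgason--Johnson type arguments) gives the inversion
\[
f(X) = C \int_\a \tilde f(\lambda)\, \psi_{i\lambda}(X)\, \pi(\lambda)^2\, d\lambda, \qquad \tilde f(\lambda) = \int_\a f(Y)\, \psi_{-i\lambda}(Y)\, \pi(Y)^2\, dY,
\]
for $W$-invariant Schwartz functions $f$, with Plancherel density $\pi(\lambda)^2 d\lambda$. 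Applying this to $f(Y) = p_t^W(X,\cdot)(Y)$ at fixed $X$, and using the heat equation $\partial_t p_t^W = L_M p_t^W$ with $p_0^W(X,\cdot) = \delta_X$, one finds $\tilde f(\lambda) = e^{-|\lambda|^2 t}\, \psi_{-i\lambda}(X)$. Substituting into the inversion formula yields \eqref{heat-inverseFourier}.

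For part (b), the argument is parallel but relies on the Harish-Chandra spherical Fourier transform on the non-compact symmetric space $G/K$. The spherical functions $\phi_{i\lambda}$ satisfy $L\, \phi_{i\lambda} = -(|\lambda|^2 + |\rho|^2)\, \phi_{i\lambda}$, which accounts for the $|\rho|^2 t$ shift in the exponent. The Harish-Chandra--Gangolli inversion theorem provides
\[
f(X) = C \int_{\a^*} \tilde f(\lambda)\, \phi_{i\lambda}(X)\, \frac{d\lambda}{|c(\lambda)|^2},
\]
with Plancherel density $|c(\lambda)|^{-2} d\lambda$. Again writing $\tilde p_t^W(\lambda) = e^{-(|\lambda|^2+|\rho|^2)t}\, \phi_{-i\lambda}(X)$ and inserting it gives \eqref{heat-inverseFourierGeneral}.

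The routine but slightly delicate steps are verifying that $p_t^W$ lies in a suitable class of functions for which the spherical Plancherel inversion applies (Gaussian decay in both variables takes care of this), and identifying the Plancherel constant $C$ explicitly; the formal computation of the spherical transform of $p_t^W$ is immediate from the eigenfunction property. The main conceptual obstacle is ensuring the correct normalization between the Euclidean/radial measures $\pi(Y)^2 dY$ on $M$ and $\delta(Y)\, dY$ on $G/K$ and the dual Plancherel measures; this is essentially bookkeeping but must be done carefully to recover the stated constants.
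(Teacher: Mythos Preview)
Your proposal is correct and relies on the same two ingredients as the paper: the eigenfunction identity $\Delta\phi_{i\lambda}=-(|\lambda|^2+|\rho|^2)\phi_{i\lambda}$ (resp.\ $L_M\psi_{i\lambda}=-|\lambda|^2\psi_{i\lambda}$) and the spherical Plancherel inversion. The logical direction, however, is reversed. You start from the heat kernel, compute its spherical transform via the heat equation and the initial condition, and then invert. The paper instead takes the right-hand side of \eqref{heat-inverseFourierGeneral} as a candidate, convolves it against a test function $f$ to form $u(X,t)$, checks directly that $\Delta u=\partial_t u$ from the eigenfunction identity, and then uses Fubini together with the inversion formula and dominated convergence to recover $u(X,0)=f(X)$. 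The paper's route has the minor advantage that it does not presuppose a priori decay or regularity of $p_t^W$ sufficient to justify taking its spherical transform; it simply exhibits a function with the defining properties of the heat kernel. Your route is equally standard and perhaps more transparent once one is willing to grant, as you note, that Gaussian decay places $p_t^W(X,\cdot)$ in the domain of the spherical transform.
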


\begin{proof} We will prove (b).
We show that the right hand side of equation \eqref{heat-inverseFourier} satisfies the definition of the heat kernel. For a test function $f$, consider
\begin{align*}
u(X,t)=C\,\int_\a\,\int_{\a}\, e^{-(|\lambda|^2+|\rho|^2)\,t}\, \phi_{i\,\lambda}(X)\,\phi_{-i\,\lambda}(Y) \,K\,|c(\lambda)|^{-2}\,d\lambda\,f(Y)\,|c(\lambda)|^{-2}\,dY
\end{align*}
where $K\,|c(\lambda)|^{-2}\,d\lambda$ is Plancherel measure.

The fact that $\Delta\,u(X,t)=\frac{\partial ~}{\partial t}\,u(X,t)$ where $\Delta$ is the radial Laplacian follows easily from the fact that 
$\Delta\,\phi_{i\,\lambda}(X)=-(|\lambda|^2+|\rho|^2)\,\phi_{i\,\lambda}(X)$ and $\frac{\partial ~}{\partial t}\,e^{-(|\lambda|^2+|\rho|^2)]\,t} =-(|\lambda|^2+|\rho|^2)\,e^{-|\lambda|^2t} $.
Now, using Fubini's theorem,
\begin{align*}
u(X,t)&=C\,K\,\int_\a\, e^{-(|\lambda|^2+|\rho|^2)\,t}\,\left[\int_{\a}\,\phi_{-i\,\lambda}(Y) \,f(Y)\,|c(\lambda)|^{-2}\,dY\right]\,\phi_{i\,\lambda}(X)\,\pi(\lambda)^2\,d\lambda\\
&=C\,K\,\int_\a\, e^{-(|\lambda|^2+|\rho|^2)\,t}\,\tilde{f}(\lambda)\,\phi_{i\,\lambda}(X)\,\pi(\lambda)^2\,d\lambda
\end{align*}
which tends to $f(X)$ as $t\to 0$ by the dominated convergence theorem.
\end{proof}

\begin{remark}
The heat kernel estimates
of $h_t^W(X)= p_t^W(X,0)$
on symmetric spaces (\cite{Anker1} and references therein) are based on the inverse spherical Fourier transform formula
which is a special case of \eqref{heat-inverseFourierGeneral} when $Y=0$. Thus one may hope that estimates of $p_t^W(X,Y)$ can be deduced from \eqref{heat-inverseFourierGeneral}.
\end{remark}

\begin{remark}\label{product}
The passage from $h_t^W(X)$ to $ p_t^W(X,Y)$ is well understood at the group level:
\begin{align*} 
p_t^W(g,h)=h_t^W(h^{-1}g),
\end{align*}
 which is equivalent to 
\begin{align*}
p_t^W(X,Y)=\int_K \,h_t^W(e^{-Y}\,k^{-1}\,e^X)\,dk
\end{align*}
and to
\begin{align}\label{heat-product}
 p_t^W(X,Y)= \int_{\a}\, h_t^W(H) \,k(H,-Y,X)\,\pi(H)\,dH,
\end{align}
where the last formula contains the product formula kernel $k$ which is defined by
\begin{align*}
\int_{\a}\, \psi_\lambda(e^H)\,k(H,X,Y)\, \pi(H)\,dH=\psi_\lambda(e^X)\,\psi_\lambda(e^Y)
=\int_K \,\psi_\lambda(e^X \,k \,e^Y) \,dk.
\end{align*}
Similarly,
\begin{align}\label{heat-product-curved}
\tilde p_t^W(X,Y)= \int_{\a}\, \tilde h_t^W(H) \,\tilde k(H,-Y,X)\,\delta(H)\,dH,
\end{align}
where the last formula contains the product formula kernel $\tilde k$ which is defined by
\begin{align*}
\int_{\a}\, \phi_\lambda(e^H)\,\tilde k(H,X,Y)\, \delta(H)\,dH=\phi_\lambda(e^X)\,\phi_\lambda(e^Y)
=\int_K \,\phi_\lambda(e^X \,k \,e^Y) \,dk.
\end{align*}

\end{remark}

\end{document}